\newtheorem{Theorem}{Theorem}[section]
\newtheorem{Lemma}[Theorem]{Lemma}
\newtheorem{Proposition}[Theorem]{Proposition}
\theoremstyle{definition}
\newtheorem{Definition}[Theorem]{Definition}
\newtheorem{Example}[Theorem]{Example}
\newtheorem{Remark}[Theorem]{Remark}
 \numberwithin{equation}{section}
\email{{r.precup@ictp.acad.ro}} \email{andrei.stan@ubbcluj.ro}
\keywords{triangle inequality axiom, $b$-metric space, variational principle, fixed point} 
\subjclass[2010]{47J35, 34K35, 47H10}
\begin{document}

\begin{abstract}
In this paper, we extend the concept of \( b \)-metric spaces to the vectorial case, where the distance is vector-valued, and the constant in the triangle inequality axiom is replaced by a matrix. For such spaces, we establish results analogous to those in the \( b \)-metric setting: fixed-point theorems, stability results, and a variant of Ekeland's variational principle. As a consequence, we also derive a variant of Caristi's fixed-point theorem.
\end{abstract}
\title[]{Fixed
point results and the Ekeland variational principle in vector $B$-metric
spaces}
\author{Radu Precup}
\address{Faculty of Mathematics and Computer Science and Institute of Advanced Studies in Science and Technology, Babe\c{s}-Bolyai University, 400084 Cluj-Napoca, Romania}
\address{Tiberiu Popoviciu Institute of Numerical Analysis, Romanian Academy, P.O. Box 68-1, 400110 Cluj-Napoca, Romania}
\email{r.precup@ictp.acad.ro}

\author{Andrei Stan}
\address{Department of Mathematics, Babe\c{s}-Bolyai University, 400084 Cluj-Napoca, Romania}
\email{andrei.stan@ubbcluj.ro}

\maketitle

\section{Introduction}


The concept of a $b $-metric space arises as a natural generalization of a
metric space, where the triangle inequality axiom is relaxed by introducing
a constant $b \geq 1 $ on its right-hand side. Early ideas in this direction
can be traced back to the notion of "quasimetric" spaces, as discussed in
\cite{hyres}. However, the formal definition and terminology of $b $-metric
spaces are widely attributed to Bakhtin \cite{bahtin} and Czerwik \cite%
{Czerwik1993}. Notably, one of the earliest works to introduce a mapping
satisfying the properties of a $b $-metric dates back to 1970 in \cite%
{coifman}, where such a mapping was referred to as a "distance". A concept
related to that of a $b $-metric is the notion of a quasi-norm, which can be
traced back to Hyers \cite{hyres2} and Bourgin \cite{Bourgin}, who
originally used the term "quasi-norm." For a survey on $b$-metric spaces we
send the reader to \cite{berinde,survey}.

Various results from the classical theory of metric spaces have been
extended to $b $-metric spaces, including fixed-point theorems (see, e.g.,
\cite{MITROVIC, bpr,fixed point,Reich2001,kirk carte,berinde2}), estimations
(see, e.g.,\cite{Miculescu, Suzuki}), stability results (see, e.g, \cite%
{stability monica,pp}), and variational principles (see, e.g., \cite%
{bmv,ekeland2}). In \cite{monica}, the metric was allowed to take vector
values, and results analogous to those for $b $-metric spaces were
established, with matrices converging to zero replacing the contraction
constants, but not the constant $b$ from the triangle inequality axiom.


In this paper, we introduce the concept of a vector $B $-metric space, where
the scalar constant $b $ in the triangle inequality is replaced by a matrix $%
B $. This generalization introduces new challenges in establishing results
analogous to those for classical $b $-metric spaces. To the best of our
knowledge, this concept, along with the corresponding results presented
here, is novel. Notably, some of the results appear to be new even in the
scalar particular case where the matrix $B $ is reduced to a constant.

Throughout this paper, we consider $\mathbb{R}^n$-valued vector metrics ($n
\geq 1 $) on a set $X $, i.e., mappings $d: X \times X \to \mathbb{R}_+^n $.
In the scalar case ($n = 1 $), we use the special notation $\rho $ to denote
a standard metric or a $b$-metric.

The classical definition of a $b$-metric reads as follows:

\begin{Definition}
Let $X$ be a set and let $b \geq 1$ be a given real number. A mapping $\rho
: X \times X \to \mathbb{R}_+$ is said to be a \textit{b-metric} if for all $%
x, y, z \in X$ the following conditions are satisfied: $\rho(x, y)\geq 0$, $%
\rho(x, y) = 0$ if and only if $x = y$, $\rho(x, y) = \rho(y, x)$ and $%
\rho(x, z) \leq b \left(\rho(x, y) +\rho(y, z)\right)$. The pair $(X, \rho)$
is called a $b$-metric space.
\end{Definition}

In case the mapping $\rho$ is allowed to be vector-valued and one replaces
the constant $b $ by a matrix $B $, we obtain our definition of a vector $B$%
-metric space.

\begin{Definition}
\label{definitie B metrica vectiriala}  Let $X$ be a set, $n\geq 1$ and let $%
B\in \mathcal{M}_{n\times n}(\mathbb{R})$ be an arbitrary matrix.  A mapping
$d=(d_1,d_2, \ldots, d_n)\colon X\times X\to \mathbb{R}^n_+$ is called a
vector $B$-metric if for all $u,v,w\in X,$ one has

\begin{enumerate}
\item[ ] \text{(positivity)}: $d(u, v) \geq 0 $ and $d(u, v) = 0 $ if and
only if $u = v $;

\item[ ]  \text{(symmetry)}: $d(u, v) = d(v, u) $;

\item[ ] \textit{(triangle inequality)}: $d(u, w) \leq B \left(d(u, v) +
d(v, w)\right) $.
\end{enumerate}
The pair $(X,d)$ is called a vector $B$-metric space.
\end{Definition}

\section{ Preliminaries}

In this paper, the vectors in $%
\mathbb{R}
^{n}$ are looked as column matrices and ordering between them and, more
generally, between matrices of the same size is understood by components.
Likewise, the convergence of a sequence of vectors or matrices is understood
componentwise.

The spaces of square matrices of size $n$ with real number entries and
nonnegative entries are denoted by $\mathcal{M}_{n\times n}\left(
\mathbb{R}
\right) $ and $\mathcal{M}_{n\times n}\left(
\mathbb{R}
_{+}\right) ,$ respectively. An element of $\mathcal{M}_{n\times n}\left(
\mathbb{R}
_{+}\right) $ is refereed as a \textit{positive matrix}, while a matrix $%
M\in \mathcal{M}_{n\times n}\left(
\mathbb{R}
\right) $ is called \textit{inverse-positive} if it is invertible and its
inverse $M^{-1}$ is positive.

A positive matrix $M$ is said to be \textit{convergent to zero} if its power
$M^{k}$ tends to the zero matrix $0_{n}$ as $k\rightarrow \infty .$

One has the following characterizations of matrices which are convergent to
zero (see, e.g., \cite{p role,nonnegative}).

\begin{Proposition}
Let $M\in \mathcal{M}_{n\times n}\left(
\mathbb{R}
_{+}\right) $ and let $I$ be the identity matrix of size $n.$ The following
statements are equivalent:
\begin{description}
\item[(a)] $M$ is convergent to zero.

\item[(b)] The spectral radius $r\left( M\right) $ of matrix $M$ is less
than $1,$ i.e., $r\left( M\right) <1.$

\item[(c)] $I-M$ is invertible and $\left( I-M\right) ^{-1}=I+M+M^{2}+\ ....$

\item[(d)] $I-M$ is inverse-positive.
\end{description}
\end{Proposition}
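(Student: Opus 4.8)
The plan is to establish all four equivalences through the cycle $(a)\Leftrightarrow(b)\Rightarrow(c)\Rightarrow(d)\Rightarrow(b)$, so that closing the loop back to $(b)$ links every statement to the others.

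First I would treat $(a)\Leftrightarrow(b)$ via the spectral radius. By Gelfand's formula $r(M)=\lim_{k\to\infty}\|M^k\|^{1/k}$ for any submultiplicative matrix norm, so if $r(M)<1$ one may fix $\rho$ with $r(M)<\rho<1$ and obtain $\|M^k\|\le\rho^k$ for all large $k$, forcing $M^k\to 0_n$. Conversely, if $M$ has an eigenvalue $\lambda$ with $|\lambda|\ge1$ and eigenvector $v\ne 0$, then $M^k v=\lambda^k v$ does not tend to zero, so $M^k\not\to 0_n$; contraposition gives $(a)\Rightarrow(b)$. For $(b)\Rightarrow(c)$, since $r(M)<1$ none of the eigenvalues of $M$ equals $1$, whence $I-M$ is invertible; the telescoping identity $(I-M)(I+M+\cdots+M^k)=I-M^{k+1}$ combined with $M^{k+1}\to 0_n$ shows that the Neumann series $\sum_{j\ge0}M^j$ converges to $(I-M)^{-1}$. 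For $(c)\Rightarrow(d)$, because $M\in\mathcal{M}_{n\times n}(\mathbb{R}_+)$, every power $M^j$ is a positive matrix, hence so is the convergent sum $(I-M)^{-1}=\sum_{j\ge0}M^j$, and therefore $I-M$ is inverse-positive.

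The main obstacle, and the only step genuinely exploiting the nonnegativity of $M$, is $(d)\Rightarrow(b)$. Here I would invoke the Perron--Frobenius theorem: as $M$ is a positive matrix, its spectral radius $r(M)$ is itself an eigenvalue, with an associated nonnegative eigenvector $v\ge0$, $v\ne0$. Assume for contradiction that $r(M)\ge1$ and write $\lambda=r(M)$, so that $(I-M)v=(1-\lambda)v$. If $\lambda=1$, then $0$ is an eigenvalue of $I-M$, contradicting its invertibility. If $\lambda>1$, then applying the positive matrix $(I-M)^{-1}$ yields $v=(1-\lambda)(I-M)^{-1}v$, whose right-hand side is $\le0$ since $1-\lambda<0$ and $(I-M)^{-1}v\ge0$; together with $v\ge0$ this forces $v=0$, a contradiction. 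Hence $r(M)<1$, closing the cycle and completing the proof.

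I expect the first three implications to be routine, relying only on the Neumann series and the monotone behaviour of positive matrices, while the delicate point is the appeal to Perron--Frobenius in $(d)\Rightarrow(b)$, where inverse-positivity is converted into a genuine spectral bound by testing against the nonnegative Perron eigenvector.
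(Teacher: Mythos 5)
Your proof is correct. Note, however, that the paper does not prove this proposition at all: it is stated as a known characterization and attributed to the literature (the references \cite{p role} and \cite{nonnegative}, the latter being Berman--Plemmons, \emph{Nonnegative Matrices in the Mathematical Sciences}), so there is no in-paper argument to compare against. Your cycle $(a)\Leftrightarrow(b)\Rightarrow(c)\Rightarrow(d)\Rightarrow(b)$ is logically sound and each step checks out: Gelfand's formula (or the eigenvector argument in the converse direction) for $(a)\Leftrightarrow(b)$; the telescoping identity $(I-M)(I+M+\cdots+M^k)=I-M^{k+1}$ for the Neumann series in $(b)\Rightarrow(c)$; entrywise nonnegativity of the partial sums in $(c)\Rightarrow(d)$; and the weak Perron--Frobenius theorem (spectral radius is an eigenvalue with a nonnegative eigenvector, valid for every nonnegative matrix, not just irreducible ones) to convert inverse-positivity into the spectral bound in $(d)\Rightarrow(b)$. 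This is essentially the standard proof one finds in the cited sources, and you correctly identified that the only place where nonnegativity of $M$ is genuinely needed is the last implication -- without it, $(d)$ does not imply $(b)$ (e.g.\ $M=-I$ in the scalar case gives $I-M=2$, inverse-positive, yet $r(M)=1$).
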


The following proposition collects the various properties equivalent to the
notion of an inverse-positive matrix (see, e.g., \cite{nonnegative,collatz}).

\begin{Proposition}
Let $M\in \mathcal{M}_{n\times n}\left(
\mathbb{R}
\right) .$ The following statements are equivalent:
\end{Proposition}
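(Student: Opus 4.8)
The plan is to organize the equivalences around a single pivot — the \emph{monotonicity} property that $Mx\ge 0$ forces $x\ge 0$ — and to show that this property is equivalent to inverse-positivity, routing any remaining conditions through this pair by a cycle of implications.

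First I would dispatch the easy direction: if $M$ is invertible with $M^{-1}\ge 0$, then whenever $Mx\ge 0$ we may write $x=M^{-1}(Mx)$, and a nonnegative matrix applied to a nonnegative vector is nonnegative, so $x\ge 0$. This needs no more than the definition of inverse-positivity together with the componentwise ordering fixed in the Preliminaries.

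The substantive direction is the converse, from monotonicity to inverse-positivity, and it splits into two steps. I would first extract nonsingularity: if $Mx=0$ then both $Mx\ge 0$ and $M(-x)\ge 0$, so monotonicity yields $x\ge 0$ and $-x\ge 0$, hence $x=0$; thus $M$ is injective and therefore invertible. I would then prove $M^{-1}\ge 0$ column by column: denoting by $e_j$ the standard basis vectors and by $c_j=M^{-1}e_j$ the columns of $M^{-1}$, the identity $Mc_j=e_j\ge 0$ combined with monotonicity gives $c_j\ge 0$ for every $j$, so $M^{-1}\ge 0$.

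For any further item in the list that is phrased spectrally — typically a splitting $M=sI-N$ with $N\ge 0$ subject to a condition on $r(N)$ — I would close the cycle by reducing it to the previous two. Writing $M=s\,(I-N/s)$, inverse-positivity of $M$ is equivalent to that of $I-N/s$, and here I would invoke the preceding characterization of matrices convergent to zero: $N/s$ is convergent to zero exactly when $r(N)<s$, in which case $(I-N/s)^{-1}=\sum_{k\ge 0}(N/s)^{k}\ge 0$ is manifestly nonnegative. The main obstacle I anticipate is not any single implication but keeping the cycle economical: the nonsingularity step in the converse is the one place where genuine care is needed, and if the statement does include a spectral item, the cleanest route is through the Neumann-series positivity supplied by the convergent-to-zero proposition rather than arguing positivity of $M^{-1}$ by hand.
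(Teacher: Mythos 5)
The paper itself gives no proof of this proposition (it is quoted from the cited references), so your attempt can only be judged on its own merits. The parts you do prove are correct: (a) $\Rightarrow$ (b) by applying $M^{-1}\ge 0$ to $Mx\ge 0$; (b) $\Rightarrow$ (a) via injectivity ($Mx=0$ gives $\pm x\ge 0$, so $x=0$) and then nonnegativity of the columns $M^{-1}e_j$; and (c) $\Rightarrow$ (a) via the Neumann series for $\bigl(I-\overline{M}/s\bigr)^{-1}$. The genuine gap is that your ``cycle'' is never closed: an equivalence of three statements needs an implication \emph{into} (c), i.e., starting from an inverse-positive (equivalently, monotone) $M$ you must \emph{construct} a nonnegative $\overline{M}$ and a scalar $s>r\left(\overline{M}\right)$ with $M=sI-\overline{M}$. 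Your sentence ``inverse-positivity of $M$ is equivalent to that of $I-N/s$'' presupposes that a splitting already exists, which is precisely what has to be produced; as written, you have proved only (a) $\Leftrightarrow$ (b) and (c) $\Rightarrow$ (a).

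Moreover, this missing implication is not a routine omission: as the proposition is stated, it is false, so no argument can fill the gap without an extra hypothesis. Any matrix of the form $sI-\overline{M}$ with $\overline{M}\ge 0$ has all off-diagonal entries $\le 0$, yet $M=\left(\begin{smallmatrix}0 & 1\\ 1 & 0\end{smallmatrix}\right)$ is inverse-positive ($M^{-1}=M\ge 0$) and monotone while having positive off-diagonal entries; hence (a) and (b) hold but (c) fails. The standard result (Berman--Plemmons, the paper's own reference) asserts the equivalence with (c) only for $Z$-matrices, i.e., under the standing assumption $m_{ij}\le 0$ for $i\neq j$. Under that assumption the proof of (a) $\Rightarrow$ (c) takes $s\ge \max_i m_{ii}$, sets $\overline{M}=sI-M\ge 0$, and then uses Perron--Frobenius: for a nonnegative eigenvector $v\neq 0$ with $\overline{M}v=r\left(\overline{M}\right)v$ one has $Mv=\left(s-r\left(\overline{M}\right)\right)v$, and both $s=r\left(\overline{M}\right)$ (non-invertibility) and $s<r\left(\overline{M}\right)$ (which would force $M^{-1}v\le 0$ while $M^{-1}v\ge 0$, so $v=0$) are excluded. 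This spectral step, and the $Z$-matrix restriction that makes it available, are the substantive content your proposal is missing.
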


\begin{description}
\item[(a)] $M$ is inverse-positive.

\item[(b)] $M$ is monotone, i.e., $Mx\geq 0$ $\left( x\in
\mathbb{R}
^{n}\right) $ implies $x\geq 0.$

\item[(c)] There exists a positive matrix $\overline{M}$ and a real number $%
s>r\left( \overline{M}\right) $ such that the following representation
holds: $M=sI-\overline{M}.$
\end{description}

Clearly, if $M$ is inverse-positive, from the representation $M=sI-\overline{%
M},$ we immediately see that all its entries except those from the diagonal
are $\leq 0;$ also the matrix $\frac{1}{s}\overline{M}$ is convergent to
zero. If a matrix $M$ is both positive and inverse-positive, using the
representation $M=sI-\overline{M}$ we deduce that $M$ must be a diagonal
matrix with strictly positive diagonal entries.

A mapping $N:X\rightarrow X$ defined on a vector $B$-metric space $\left(
X,d\right) $ is said to be a \textit{Perov contraction mapping} if there
exists a matrix $A$ convergent to zero such that%
\begin{equation}
d\left( N\left( x\right) ,N\left( y\right) \right) \leq Ad\left( x,y\right)
\label{f0}
\end{equation}%
for all $x,y\in X.$

The next proposition is about the relationship between vector $B$-metrics
and both vector and scalar $b$-metrics.

\begin{Proposition}
(1$^{0}$) Any vector-valued $b$-metric $d$ can be identified with a vector $%
B_{b}$-metric, where $B_{b}$ is the diagonal matrix whose diagonal entries
are all equal to $b.$

(2$^{0}$) If $d$ is a vector $B$-metric with an inverse-positive matrix $B,$
then $d$ is also a vector $\underline{B}$-metric with respect to the
diagonal matrix $\underline{B}$ that preserves the diagonal of $B,$ as well
as a vector-valued $\Tilde{b}$-metric with $\Tilde{b}=\max \left\{ b_{ii}:\
1\leq i\leq n\right\} .$ Here $B=\left( b_{ij}\right) _{1\leq i,j\leq n}.$

(3$^{0}$) If $d$ is a vector $B$-metric with a positive matrix $B,$ then to
each norm in $%
\mathbb{R}
^{n}$ one can associate a scalar $b$-metric, for example:%

\begin{equation*}
\begin{aligned} \rho_1(x, y) &:= \sum\limits_{i=1}^{n} d_i(x, y), & \text{is
a } b_1\text{-metric,} & \quad b_1 := \sum\limits_{i=1}^{n} \max_{1 \leq j
\leq n} b_{ij}, \\ \rho_{\infty}(x, y) &:= \max_{1 \leq i \leq n} d_i(x, y),
& \text{is a } b_{\infty}\text{-metric,} & \quad b_{\infty} := \max_{1 \leq
i \leq n} \sum\limits_{j=1}^{n} b_{ij}, \\ \rho_2(x, y) &:= \left(
\sum\limits_{i=1}^{n} d_i(x, y)^2 \right)^{\frac{1}{2}}, & \text{is a }
b_2\text{-metric,} & \quad b_2 := \left( \sum\limits_{i,j=1}^{n} b_{ij}^2
\right)^{\frac{1}{2}}. \end{aligned}
\end{equation*}
\end{Proposition}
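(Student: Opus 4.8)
The three assertions are largely independent, and the plan is to treat them separately, the second being the conceptual core. For (1$^0$) there is essentially nothing to prove beyond unwinding the matrix product: with $B_b$ the diagonal matrix carrying $b$ on its diagonal, the $i$-th component of $B_b\left(d(u,v)+d(v,w)\right)$ is exactly $b\left(d_i(u,v)+d_i(v,w)\right)$, so the vector inequality $d(u,w)\le B_b\left(d(u,v)+d(v,w)\right)$ is nothing but the scalar $b$-metric inequality asserted componentwise. Positivity and symmetry transfer verbatim.

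For (2$^0$) the key input is the structural fact recalled just above: an inverse-positive $B$ admits the representation $B=sI-\overline{M}$, whence every off-diagonal entry $b_{ij}$ (with $i\ne j$) is $\le 0$. I would write the triangle inequality componentwise,
\[
d_i(u,w)\le \sum_{j=1}^{n} b_{ij}\left(d_j(u,v)+d_j(v,w)\right),
\]
and isolate the diagonal term. Since $d_j\ge 0$ and $b_{ij}\le 0$ for $j\ne i$, every off-diagonal summand is nonpositive and may be dropped from the bound, leaving $d_i(u,w)\le b_{ii}\left(d_i(u,v)+d_i(v,w)\right)$ for each $i$; this is precisely the vector $\underline{B}$-metric inequality. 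Replacing each $b_{ii}$ by $\tilde b=\max_i b_{ii}\ge b_{ii}$, which is legitimate because the bracketed quantity is $\ge 0$, then yields the vector-valued $\tilde b$-metric inequality. To confirm $\tilde b\ge 1$, so that $\tilde b$ is a genuine $b$-metric constant, I would set $v=u$ in the diagonal inequality to get $d_i(u,w)\le b_{ii}\,d_i(u,w)$ and choose a component and a pair with $d_i(u,w)>0$, possible since $d$ separates points, forcing $b_{ii}\ge 1$.

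For (3$^0$), with $B$ positive, I would in each case start from the componentwise inequality $d_i(x,z)\le \sum_j b_{ij}a_j$, writing $a_j:=d_j(x,y)+d_j(y,z)\ge 0$, and apply the scalar estimate tailored to the norm. For $\rho_1$ I would sum over $i$ and bound $b_{ij}\le \max_j b_{ij}$, obtaining $\sum_i\sum_j b_{ij}a_j\le \left(\sum_i \max_j b_{ij}\right)\sum_j a_j=b_1\left(\rho_1(x,y)+\rho_1(y,z)\right)$. For $\rho_\infty$ I would take the maximum over $i$, use $\sum_j b_{ij}a_j\le \left(\sum_j b_{ij}\right)\max_j a_j$ together with $\max_j a_j\le \rho_\infty(x,y)+\rho_\infty(y,z)$, the row sum then being maximized over $i$ to produce $b_\infty$. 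For $\rho_2$ I would square the inequality, apply the Cauchy--Schwarz inequality to each row $\sum_j b_{ij}a_j\le \left(\sum_j b_{ij}^2\right)^{1/2}\left(\sum_j a_j^2\right)^{1/2}$, sum over $i$, and close with the Euclidean triangle inequality $\left(\sum_j a_j^2\right)^{1/2}\le \rho_2(x,y)+\rho_2(y,z)$, which yields the Frobenius constant $b_2$. In each subcase positivity and symmetry of $\rho_\bullet$ are inherited from $d$, and the constant is $\ge 1$ by the same $y=x$ substitution used in (2$^0$).

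I expect the only genuine obstacle to be conceptual rather than computational: recognizing in (2$^0$) that inverse-positivity is exactly the hypothesis pinning down the signs of the off-diagonal entries, which is what legitimizes discarding the cross terms. Once that observation is in hand, parts (1$^0$) and (3$^0$) reduce to routine estimates.
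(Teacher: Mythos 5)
Your proof is correct and follows exactly the route the paper intends: the paper states this proposition without an explicit proof, but the single nontrivial fact your argument for (2$^{0}$) hinges on --- that an inverse-positive matrix, via the representation $B=sI-\overline{M}$, has nonpositive off-diagonal entries, which is what licenses dropping the cross terms --- is precisely the remark the paper records immediately before the proposition, while (1$^{0}$) and (3$^{0}$) are the routine componentwise, Cauchy--Schwarz and Minkowski estimates you give. Nothing is missing; your verifications that the resulting constants are at least $1$ (via the substitution $v=u$, assuming $X$ has more than one point) address a point the paper leaves implicit.
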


Thus, to any vector $B$-metric, one can associate different (scalar) $b$%
-metrics, depending on the chosen metric on $%
\mathbb{R}
^{n}.$ However, as shown in \cite{p role}, working in a vector setting with
matrices instead of numbers is more accurate especially when a connection
with other matrices is necessary. It will also be the case of this work
where some conditions or conclusions will connect the matrix $B$ with the
matrix $A$ involved in (\ref{f0}).

If $Y$ is a nonempty subset of a vector $B$-metric space $\left( X,d\right)
, $ we define the \textit{diameter} of the set $Y$ by
\begin{equation*}
\mathrm{diam}_{d}(Y):=\sup \{\rho _{1}\left( x,y\right) \,:\ x,y\in Y\}=\sup
\left\{\sum\limits_{i=1}^{n}d_{i}\left( x,y\right) \,:\ x,y\in Y\ \right\}.
\end{equation*}%
From this definition, it follows immediately that if $\ \mathrm{diam}%
_{d}\left( Y\right) =a,$ then $d\left( x,y\right) \leq ae$ $\ $for all $%
x,y\in Y,$ where $e=(1,1,\ldots ,1)\in \mathbb{R}^{n}$. Conversely, if $%
d\left( x,y\right) \leq ae$ for all $x,y\in Y,$ then $\mathrm{diam}%
_{d}(Y)\leq na.$

Although a $b$-metric does not generate a topology (see, e.g., \cite{cobzas}%
), several topological properties can still be defined in terms of sequences
(e.g., closed sets, continuous operators, or lower semicontinuous
functionals).

We conclude this section by two examples of vector $B$-metrics.

\begin{Example}
Let $\ d\colon \mathbb{R}^{2}\times \mathbb{R}^{2}\rightarrow \mathbb{R}%
_{+}^{2}$\ \ be given by%
\begin{equation*}
d(x,y)=%
\begin{pmatrix}
|x_{1}-y_{1}|^{2}+|x_{2}-y_{2}| \\
|x_{2}-y_{2}|,%
\end{pmatrix}%
,
\end{equation*}%
for $x=(x_{1},x_{2}),\ y=(y_{1},y_{2})\in
\mathbb{R}
^{2}.$ Then, $\left(
\mathbb{R}
^{2},d\right) $ is a vector $B$-metric space, where
\begin{equation*}
B=%
\begin{pmatrix}
2 & -1 \\
0 & 1%
\end{pmatrix}%
.
\end{equation*}%
Here, the matrix $B$ is inverse-positive, but not positive.
\end{Example}


\begin{Example}
We present an example of a vector-valued mapping $d$ which is a vector $B$%
-metric with respect to a positive matrix, but for which no inverse-positive
matrix exists such that $d$ remains a vector $B$-metric. Let
\begin{equation*}
S=\left\{ (t,t)\,:\ \,t\in \mathbb{R}\right\} \subset \mathbb{R}^{2},
\end{equation*}%
and let $\ d\colon \mathbb{R}^{2}\times \mathbb{R}^{2}\rightarrow \mathbb{R}%
_{+}^{2}$\ \ be given by
\begin{equation*}
d(x,y)=%
\begin{cases}
(0,0) & \text{if }x=y, \\
\left( |x-y|^{2},|x-y|\right)  & \text{if }x,y\in S, \\
\left( |x-y|,|x-y|^{2}\right)  & \text{otherwise},%
\end{cases}%
\end{equation*}%
where $\left\vert z\right\vert =|(z_{1},z_{2})|=|z_{1}|+|z_{2}|$ is a norm
on $\mathbb{R}^{2}$. Note that $d$ is a vector $B_{0}$-metric, where
\begin{equation*}
B_{0}=%
\begin{pmatrix}
2 & 2 \\
1 & 1%
\end{pmatrix}%
.
\end{equation*}%
Let us show that $B_{0}$ is the smallest matrix for which the triangle
inequality holds for $d$. To this aim, let $B=(b_{ij})_{1\leq i,j\leq n}$ be
any matrix for which the triangle inequality is satisfied. Then, for $x,y\in
S$ and $z\notin S$, we have
\begin{equation}
\begin{pmatrix}
|x-y|^{2} \\
|x-y|%
\end{pmatrix}%
\leq
\begin{pmatrix}
b_{11}\left( |x-z|+|z-y|\right) +b_{12}\left( |x-z|^{2}+|z-y|^{2}\right)  \\
b_{21}\left( |x-z|+|z-y|\right) +b_{22}\left( |x-z|^{2}+|z-y|^{2}\right)
\end{pmatrix}%
.  \label{ineq generala exemplu}
\end{equation}%
Let $t,\alpha \in \mathbb{R}\setminus \left\{ 0\right\} $, and set $%
x=(t,t)\in S,$ $y=(0,0)\in S$ and $z=(\alpha ,0)\notin S$. The first
inequality in \eqref{ineq generala exemplu} yields,
\begin{equation*}
4t^{2}\leq b_{11}\left( |t-\alpha |+|t|+\left\vert \alpha \right\vert
\right) +b_{12}\left( (|t-\alpha |+|t|)^{2}+\alpha ^{2}\right) .
\end{equation*}%
Clearly, taking $\alpha =t$ and the limit as $t\rightarrow \infty $, this
inequality holds only if $b_{12}\geq 2$. Similarly, from the second
inequality, we obtain
\begin{equation*}
2\left\vert t\right\vert \leq b_{21}\left( |t-\alpha |+|t|+\left\vert \alpha
\right\vert \right) +b_{22}\left( (|t-\alpha |+|t|)^{2}+\alpha ^{2}\right) .
\end{equation*}%
Setting $\alpha =\frac{t}{2}$, we find that
\begin{equation*}
2\left\vert t\right\vert \leq 2b_{21}\left\vert t\right\vert +5b_{22}\frac{%
t^{2}}{2},\quad \text{or equivalently,}\quad 5b_{22}\frac{t^{2}}{2}%
+2\left\vert t\right\vert (b_{21}-1)\geq 0.
\end{equation*}%
Clearly, this inequality required for all $t$ implies $b_{21}\geq 1$. To
determine the values of $b_{11}$ and $b_{22}$, we apply the triangle
inequality with $x,y,z\in S\ \,(x\neq y\neq z\neq x)$, which gives
\begin{equation*}
\begin{pmatrix}
|x-y|^{2} \\
|x-y|%
\end{pmatrix}%
\leq
\begin{pmatrix}
b_{11}\left( |x-z|^{2}+|z-y|^{2}\right) +b_{12}\left( |x-z|+|z-y|\right)  \\
b_{21}\left( |x-z|^{2}+|z-y|^{2}\right) +b_{22}\left( |x-z|+|z-y|\right)
\end{pmatrix}%
.
\end{equation*}%
Similar arguments as above imply that $b_{11}\geq 2$ and $b_{22}\geq 1.$
Thus, $B\geq B_{0}$ as claimed.
\end{Example}

\section{Fixed point theorems in vector $B$-metric spaces}

In this section we establish some fixed point results in vector $B$-metric
spaces, analogous to the well-known classical results.

\subsection{Perov type fixed point theorem}


Our first result is a version of Perov's fixed point theorem (see, \cite%
{perov1, perov2}) for such spaces.

\begin{Theorem}
\label{Perov} Let $(X,d)$ be a complete vector $B$-metric space, where $B$
is either a positive or an inverse-positive matrix, and let $N\colon
X\rightarrow X$ be an operator. Assume that there exists a convergent to
zero matrix $A\in \mathcal{M}_{n\times n}(\mathbb{R}_{+})$ such that
\begin{equation}
d(N(x),N(y))\leq Ad(x,y),\quad \text{for all }x,y\in X,
\label{conditie contractie matrice}
\end{equation}%
i.e., $N$ is a Perov contraction mapping. Then, $N$ has a unique fixed point.
\end{Theorem}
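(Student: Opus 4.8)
The plan is to run the Picard iteration exactly as in Perov's theorem and to concentrate all of the $b$-metric difficulty into a single scalar Cauchy criterion. Fix $x_{0}\in X$ and set $x_{k+1}=N(x_{k})$. Iterating \eqref{conditie contractie matrice} and using that $A\in\mathcal{M}_{n\times n}(\mathbb{R}_{+})$ preserves the vector order, I get $d(x_{k},x_{k+1})\le A^{k}\,d(x_{0},x_{1})$ for every $k$. Because $A$ is convergent to zero we have $r(A)<1$, so Gelfand's formula yields, for any fixed $\mu\in(r(A),1)$, a constant $C>0$ with $\|A^{k}\|\le C\mu^{k}$; consequently each component $d_{i}(x_{k},x_{k+1})$ — and hence $\rho_{1}(x_{k},x_{k+1})=\sum_{i}d_{i}(x_{k},x_{k+1})$ — decays geometrically, being bounded by a constant times $\mu^{k}$.

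The heart of the argument is to upgrade this geometric decay of consecutive distances into the Cauchy property, and here the two admissible forms of $B$ are handled through the reduction results of the preliminary proposition. If $B$ is positive, part $(3^{0})$ makes $\rho_{1}$ a genuine scalar $b_{1}$-metric on $X$, and the previous step shows $\rho_{1}(x_{k},x_{k+1})\le C'\mu^{k}$. If instead $B$ is inverse-positive, part $(2^{0})$ shows that every component $d_{i}$ obeys the triangle inequality with the single constant $\tilde b=\max_{i}b_{ii}$, and each $d_{i}(x_{k},x_{k+1})$ again decays geometrically. In both cases I am reduced to the scalar statement: in a $b$-metric space, a sequence with geometrically decaying consecutive distances is Cauchy. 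I expect this to be the main obstacle. The naive iteration of the triangle inequality produces accumulating coefficients $b^{i}$ and converges only under a joint condition such as $b\mu<1$, which is deliberately absent from the hypotheses (no relation between $A$ and $B$ is imposed). The honest route is to bypass linear telescoping: a dyadic grouping bounds $\rho(x_{n},x_{n+2^{k}})$ by a constant multiple of $b^{k}\mu^{n}$, and remetrizing the $b$-metric by an equivalent true metric $\delta$ comparable to a power $\rho^{p}$ converts geometric decay into a summable series, giving the Cauchy property. I would record this as a preliminary lemma, either proved this way or cited from the $b$-metric literature (cf. \cite{Miculescu,Suzuki}).

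Granting that $\{x_{k}\}$ is Cauchy, completeness provides a limit $x^{*}$, and the remaining steps are the classical Perov argument transported verbatim. The contraction makes $N$ continuous, since $d(N(x_{k}),N(x^{*}))\le A\,d(x_{k},x^{*})\to 0$; thus $x_{k+1}=N(x_{k})\to N(x^{*})$, while also $x_{k+1}\to x^{*}$. Limits in a vector $B$-metric space are unique — from $d(a,b)\le B\bigl(d(a,x_{k})+d(x_{k},b)\bigr)\to 0$ one gets $a=b$, the sign pattern of $B$ being irrelevant since only the convergence of the right-hand side is used — so $N(x^{*})=x^{*}$. Finally, for uniqueness, if $x^{*}$ and $y^{*}$ are both fixed then $d(x^{*},y^{*})=d(N(x^{*}),N(y^{*}))\le A\,d(x^{*},y^{*})\le\cdots\le A^{k}d(x^{*},y^{*})\to 0$, whence $x^{*}=y^{*}$. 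The only genuinely delicate ingredient is the scalar Cauchy criterion of the second paragraph; everything else is routine once the reduction to a scalar $b$-metric is in place.
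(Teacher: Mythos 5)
Your proposal is correct, but it follows a genuinely different route from the paper's proof. The paper never leaves the vector setting: it fixes $k_{0}$ with $A^{k_{0}}\leq \Lambda$, where $\Lambda$ is the matrix with all entries equal to a small $\alpha$, applies the triangle inequality only \emph{twice}, through the distant iterates $x_{k+k_{0}}$ and $x_{p+k_{0}}$, and uses the contraction over the long stretch, $d(x_{k+k_{0}},x_{p+k_{0}})\leq A^{k_{0}}d(x_{k},x_{p})$, so that the unknown quantity $d(x_{k},x_{p})$ reappears on the right-hand side with the small coefficient $\Lambda$ and can be absorbed into the left-hand side; this produces \eqref{ineq perov} in the inverse-positive case and \eqref{rpf} in the positive case, which are then exploited, respectively, by a componentwise summation argument (positivity of the column sums of $B^{-2}$) and by inverting $I-B^{2}\Lambda$. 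You instead reduce to the scalar theory: by parts (2$^{0}$) and (3$^{0}$) of the proposition in the Preliminaries, $\rho_{1}=\sum_{i}d_{i}$ is a scalar $b$-metric (with constant $b_{1}$ when $B$ is positive, $\tilde{b}$ when $B$ is inverse-positive, completeness transferring since $d$-Cauchy and $\rho_{1}$-Cauchy coincide), and then you invoke the scalar lemma that geometric decay of consecutive distances forces the Cauchy property with \emph{no} compatibility condition between the ratio $\mu$ and the constant $b$. Both arguments hinge on the same essential point --- avoiding a hypothesis of the type $b\mu<1$ --- but achieve it differently: the paper by its self-contained absorption trick (reused in the proof of Theorem \ref{maia}), you by delegating to a known $b$-metric lemma \cite{Miculescu,Suzuki}. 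What each buys: the paper's proof is self-contained, stays in matrix form (consistent with the authors' stated preference for vectorial estimates), and needs no metrization theory; yours is more modular, isolates exactly where the difficulty sits, and exhibits the theorem as a corollary of the scalar theory plus the reduction proposition.

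One caveat on your sketch of the scalar lemma: the dyadic bound $\rho(x_{n},x_{n+2^{k}})\leq C\,b^{k}\mu^{n}$ is \emph{not} by itself sufficient, since for fixed $n$ the exponent $k$ grows like $\log_{2}$ of the gap, so the bound is not uniformly small over $m>n$. What closes the argument is either the remetrization you mention (a metric $\delta$ with $c\,\rho^{p}\leq\delta\leq\rho^{p}$, $p=\log_{2b}2$, after which $\sum_{k}\delta(x_{k},x_{k+1})<\infty$ gives Cauchy), or an additional chaining of blocks of doubling length, where convergence of $\sum_{j}b^{2j}\mu^{2^{j}}$ (doubly exponential decay beating exponential growth) yields a bound $C'\mu^{n}$ independent of $m$. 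So state the lemma with the remetrization proof or the citation, not the dyadic estimate alone; with that, your proof is complete.
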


\begin{proof}
Let $x_{0}\in X$, and recursively define
\begin{equation*}
x_{k}=N(x_{k-1}),\ \ \ \text{for\ \ }k\geq 1.
\end{equation*}%
Since the matrix $A$ is convergent to zero, for each $\alpha >0$, there
exists $k_{0}=k_{0}\left( \alpha \right) $ such that $\ $%
\begin{equation*}
A^{k_{0}}\leq \Lambda ,
\end{equation*}%
where $\Lambda $ is the square matrix of size $n$ whose entries are all
equal to $\alpha .$ Let $k,p\geq 0$ and $k_{0}$ be such that $A^{k_{0}}\leq
\Lambda $, for some $\alpha >0$ to be specified later.

Case (a): $B$ is inverse-positive$.$ The triangle inequality yields
\begin{align*}
B^{-2}d(x_{k},x_{p})& \leq
B^{-1}d(x_{k},x_{k+k_{0}})+B^{-1}d(x_{p},x_{k+k_{0}}) \\
& \leq
B^{-1}A^{k}d(x_{0},x_{k_{0}})+d(x_{p},x_{p+k_{0}})+d(x_{p+k_{0}},x_{k+k_{0}})
\\
& \leq
B^{-1}A^{k}d(x_{0},x_{k_{0}})+A^{p}d(x_{0},x_{k_{0}})+A^{k_{0}}d(x_{k},x_{p})
\\
& \leq B^{-1}A^{k}d(x_{0},x_{k_{0}})+A^{p}d(x_{0},x_{k_{0}})+\Lambda
d(x_{k},x_{p}),
\end{align*}%
which gives
\begin{equation}
(B^{-2}-\Lambda )d(x_{k},x_{p})\leq
B^{-1}A^{k}d(x_{0},x_{k_{0}})+A^{p}d(x_{0},x_{k_{0}}).  \label{ineq perov}
\end{equation}%
%
%
%
%
%
%
Given that the right-hand side of \eqref{ineq perov} is a vector that
converges to zero as $k,p\rightarrow \infty $, our goal is to show that a
linear combination of the components of the vector $d(x_{k},x_{p})$ is
bounded above by the corresponding components of the right-hand side of %
\eqref{ineq perov}. To this aim, we make the following notations
\begin{align*}
& B^{-2}=(\gamma _{ij})_{1\leq i,j\leq n},\,\  \\
& B^{-1}A^{k}d(x_{0},x_{k_{0}})+A^{p}d(x_{0},x_{k_{0}})=\varphi
_{k,p}=(\varphi _{k,p}^{i})_{1\leq i\leq n}.
\end{align*}%
Hence
\begin{equation}
\sum_{i=1}^{n}\varphi _{k,p}^{i}\rightarrow 0\ \ \ \text{as\ \ }%
k,p\rightarrow \infty .  \label{R1}
\end{equation}%
Under these notations, relation \eqref{ineq perov} gives
\begin{equation}
\sum_{j=1}^{n}(\gamma _{ij}-\alpha )d_{j}\left( x_{k},x_{p}\right) \leq
\varphi _{k,p}^{i}\,,\,\,i=1,2,\ldots ,n.
\label{inegalitati sume inmultire matrice}
\end{equation}%
Summing in \eqref{inegalitati sume inmultire matrice} over all $i\in \left\{
1,2,\ldots ,n\right\} $, we obtain
\begin{equation}
\sum_{i,j=1}^{n}(\gamma _{ij}-\alpha )d_{j}(x_{k},x_{p})\leq
\sum_{i=1}^{n}\varphi _{k,p}^{i}.  \label{suma elemnte matrice}
\end{equation}%
Since $B^{-2}$ is invertible and positive, the sum of its elements in each
column must be positive, i.e.,
\begin{equation*}
\sum_{i=1}^{n}\gamma _{ij}>0,\text{ \ \ }j=1,2,\ldots ,n.
\end{equation*}%
If we denote
\begin{equation*}
\gamma =\min \left\{ \sum_{i=1}^{n}\gamma _{ij}\,:\ \,j=1,2,\ldots
,n\right\} ,
\end{equation*}%
relation \eqref{suma elemnte matrice} implies that
\begin{eqnarray*}
\sum_{i=1}^{n}\varphi _{k,p}^{i} &\geq &\sum_{i,j=1}^{n}\gamma
_{ij}d_{j}(x_{k},x_{p})-n\alpha \sum_{j=1}^{n}d_{j}(x_{k},x_{p}) \\
&=&\sum_{j=1}^{n}\left( \sum_{i=1}^{n}\gamma _{ij}\right)
d_{j}(x_{k},x_{p})-n\alpha \sum_{j=1}^{n}d_{j}(x_{k},x_{p}) \\
&\geq &(\gamma -n\alpha )\sum_{j=1}^{n}d_{j}(x_{k},x_{p}).
\end{eqnarray*}%
Choosing $\alpha <\gamma /n$, one has
\begin{equation}
\sum_{j=1}^{n}d_{j}(x_{k},x_{p})\leq \frac{1}{\gamma -n\alpha }%
\sum_{i=1}^{n}\varphi _{k,p}^{i}.  \label{ineq finala}
\end{equation}%
In \eqref{ineq finala}, we observe that the factor $\frac{1}{\gamma -n\alpha
}$ depends only on $n$ and $B$, whence (\ref{R1}) yields
\begin{equation*}
\sum_{j=1}^{n}d_{j}(x_{k},x_{p})\rightarrow 0\ \ \ \text{as\ \ }%
k,p\rightarrow \infty ,
\end{equation*}%
so the sequence $\left( x_{k}\right) $ is Cauchy.

Case (b): $B$ is positive. One has
\begin{align*}
d(x_{k},x_{p})& \leq Bd(x_{k},x_{k+k_{0}})+Bd(x_{p},x_{k+k_{0}}) \\
& \leq
BA^{k}d(x_{0},x_{k_{0}})+B^{2}d(x_{p},x_{p+k_{0}})+B^{2}d(x_{p+k_{0}},x_{k+k_{0}})
\\
& \leq
BA^{k}d(x_{0},x_{k_{0}})+B^{2}A^{p}d(x_{0},x_{k_{0}})+B^{2}A^{k_{0}}d(x_{k},x_{p})
\\
& \leq BA^{k}d(x_{0},x_{k_{0}})+B^{2}A^{p}d(x_{0},x_{k_{0}})+B^{2}\Lambda
d(x_{k},x_{p}),
\end{align*}%
which gives
\begin{equation}
(I-B^{2}\Lambda )d(x_{k},x_{p})\leq
BA^{k}d(x_{0},x_{k_{0}})+B^{2}A^{p}d(x_{0},x_{k_{0}}).  \label{rpf}
\end{equation}%
Note that since $\Lambda ^{k}=(n\alpha )^{k-1}\Lambda $, if $\alpha $ is
chosen to be smaller than one divided by the greatest element of $B^{2}$
multiplied with $n$, the matrix $B^{2}\Lambda $ is convergent to zero.
Consequently, $I-B^{2}\Lambda $ is invertible and $\left( I-B^{2}\Lambda
\right) ^{-1}\in \mathcal{M}_{n\times n}\left(
\mathbb{R}
_{+}\right) .$ Hence, (\ref{rpf}) is equivalent to
\begin{equation}
d(x_{k},x_{p})\leq \left( I-B^{2}\Lambda \right) ^{-1}\left(
BA^{k}d(x_{0},x_{k_{0}})+B^{2}A^{p}d(x_{0},x_{k_{0}})\right) .
\label{ineq finala 1}
\end{equation}%
As the right-hand side of \eqref{ineq finala 1} converges to zero when $%
k,p\rightarrow \infty ,$ we conclude that $\left( x_{k}\right) $ is Cauchy.

Therefore, in both cases, the sequence $\left( x_{k}\right) $ is Cauchy and
since $X$ is complete, it has a limit $x^{\ast },$ that is, $d\left(
x_{k},x^{\ast }\right) \rightarrow 0$ as $k\rightarrow \infty .$ Then, from%
\begin{equation*}
d\left( N\left( x_{k}\right) ,N\left( x^{\ast }\right) \right) \leq Ad\left(
x_{k},x^{\ast }\right) ,
\end{equation*}%
it follows that $N\left( x_{k}\right) \rightarrow N\left( x^{\ast }\right) $
as $k\rightarrow \infty ,$ while from $x_{k+1}=N\left( x_{k}\right) ,$
passing to the limit, one obtains $x^{\ast }=N\left( x^{\ast }\right) .$
Hence $N$ has a fixed point. To prove uniqueness, suppose that there exists
another fixed point $x^{\ast \ast }$. Then, from
\begin{equation*}
d\left( x^{\ast },x^{\ast \ast }\right) =d\left( N\left( x^{\ast }\right)
,N\left( x^{\ast \ast }\right) \right) \leq Ad\left( x^{\ast },x^{\ast \ast
}\right) ,
\end{equation*}%
recursively, we obtain that%
\begin{equation*}
d\left( x^{\ast },x^{\ast \ast }\right) \leq A^{k}d\left( x^{\ast },x^{\ast
\ast }\right) ,
\end{equation*}%
for all $k\geq 1.$ Since $A^{k}\rightarrow 0_{n}$ as $k\rightarrow \infty ,$
we deduce that $d\left( x^{\ast },x^{\ast \ast }\right) =0,$ i.e., $x^{\ast
\ast }=x^{\ast }.$
\end{proof}

If we are not interested in the uniqueness of the fixed point for $N$, the
condition \eqref{conditie contractie matrice} can be relaxed and replaced by
a weaker assumption on the graph of $N$.

\begin{Theorem}
\label{Graph} Let $(X,d)$ be a complete vector $B$-metric space, where $B$
is either positive or inverse-positive, and let $N\colon X\rightarrow X$ be
an operator. Assume there exists a convergent to zero matrix $A\in \mathcal{M%
}_{n\times n}(\mathbb{R}_{+})$ such that
\begin{equation}
d\left( N(x),N^{2}(x)\right) \leq Ad(x,N(x)),\ \,\,\text{for all }x\in X.
\label{cg}
\end{equation}%
Then, $N$ has at least one fixed point.
\end{Theorem}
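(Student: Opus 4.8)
The plan is to run the Picard iteration, reduce the Cauchy question to a scalar $b$-metric where a known estimate applies, and then pass to the limit. First I would fix $x_{0}\in X$, set $x_{k}=N(x_{k-1})$ and write $c:=d(x_{0},x_{1})$. Applying \eqref{cg} at $x=x_{k-1}$ gives $d(x_{k},x_{k+1})\le Ad(x_{k-1},x_{k})$, so by induction
\begin{equation*}
d(x_{k},x_{k+1})\le A^{k}c,\qquad k\ge 0 .
\end{equation*}
Since $A$ is convergent to zero, $r(A)<1$, and for any fixed $\theta\in(r(A),1)$ there is $C>0$ with $\|A^{k}\|\le C\theta^{k}$; hence every component of $d(x_{k},x_{k+1})$ is bounded by $C'\theta^{k}$, i.e. the successive distances decay geometrically.

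The core step is the Cauchy property of $(x_{k})$. Here I would deliberately not imitate the absorption argument from the proof of Theorem \ref{Perov}: that argument turned $d(x_{p+k_{0}},x_{k+k_{0}})=d(N^{k_{0}}x_{p},N^{k_{0}}x_{k})$ into $A^{k_{0}}d(x_{p},x_{k})$ by using the contraction on an \emph{arbitrary} pair, which is exactly what \eqref{cg} does not provide. Instead I would reduce to a scalar $b$-metric. If $B$ is positive, part (3$^{0}$) of the proposition relating vector $B$-metrics to scalar $b$-metrics shows that $\rho_{1}(x,y)=\sum_{i=1}^{n}d_{i}(x,y)$ is a genuine $b_{1}$-metric, and $\rho_{1}(x_{k},x_{k+1})=\sum_{i=1}^{n}(A^{k}c)_{i}\le C''\theta^{k}$. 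If $B$ is inverse-positive, part (2$^{0}$) shows that each component $d_{i}$ satisfies the scalar triangle inequality with the single constant $\tilde b=\max_{i}b_{ii}$, and again $d_{i}(x_{k},x_{k+1})\le C'\theta^{k}$. In both cases the successive distances of $(x_{k})$ decay geometrically in a scalar $b$-metric (respectively a $b$-pseudometric), so by the standard Picard estimate in $b$-metric spaces (see, e.g., \cite{Miculescu}), which crucially does \emph{not} require $b\theta<1$, the sequence $(x_{k})$ is $\rho_{1}$-Cauchy (respectively Cauchy in each $d_{i}$), hence Cauchy in $(X,d)$. By completeness there is $x^{\ast}$ with $d(x_{k},x^{\ast})\to 0$.

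The last step is to identify $x^{\ast}$ as a fixed point. Since $x_{k+1}=N(x_{k})$ and $x_{k+1}\to x^{\ast}$, the continuity of $N$ yields $N(x_{k})\to N(x^{\ast})$, while the uniqueness of limits in a $b$-metric space (immediate from the triangle axiom) forces $N(x^{\ast})=x^{\ast}$. I expect this identification to be the main obstacle: the hypothesis \eqref{cg} bounds only $d(Nx,N^{2}x)$ by $d(x,Nx)$ and says nothing about $d(Nx_{k},Nx^{\ast})$, so a continuity (or closed-graph) property of $N$ seems indispensable here — e.g. the scalar map $X=[0,1]$, $N(x)=x/2$ for $x>0$, $N(0)=1$, satisfies \eqref{cg} with $A=1/2$ yet has no fixed point. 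I would therefore carry out the argument with $N$ continuous, the geometric-decay/Cauchy step being the genuinely $b$-metric-specific part of the proof.
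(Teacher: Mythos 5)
Your proposal is correct in substance, and it takes a genuinely different route from the paper --- necessarily so, because the paper's own proof of this theorem is the single sentence ``Following the proof of Theorem \ref{Perov}, \dots the sequence $x_{k}=N^{k}(x_{0})$ is convergent to a fixed point $x^{\ast}$ of $N$,'' and you are right that this does not carry over. Two steps of the Perov argument apply the contraction to pairs that are \emph{not} of the form $(x,N(x))$: the absorption step $d(x_{p+k_{0}},x_{k+k_{0}})=d\left(N^{k_{0}}(x_{p}),N^{k_{0}}(x_{k})\right)\leq A^{k_{0}}d(x_{p},x_{k})$ (and likewise $d(x_{k},x_{k+k_{0}})\leq A^{k}d(x_{0},x_{k_{0}})$), and, at the end, the identification step $d\left(N(x_{k}),N(x^{\ast})\right)\leq Ad(x_{k},x^{\ast})$. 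Neither is available under \eqref{cg}. Your replacement for the Cauchy part --- the orbit estimate $d(x_{k},x_{k+1})\leq A^{k}d(x_{0},x_{1})$, geometric decay via $r(A)<1$, reduction to the scalar $b$-metric $\rho_{1}$ when $B$ is positive or to the component $\tilde{b}$-pseudometrics when $B$ is inverse-positive, and then the lemma of \cite{Miculescu} (see also \cite{Suzuki}) that geometric decay of consecutive distances forces the Cauchy property with no hypothesis of the type $b\theta<1$ --- is sound, and it is exactly what the weaker hypothesis \eqref{cg} permits; what it buys is independence from the full contraction condition, at the price of scalarizing instead of keeping the matrix estimates.

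On the limit step, your diagnosis is not a matter of taste: your example ($X=[0,1]$ with the usual metric, $N(x)=x/2$ for $x>0$, $N(0)=1$, $A=1/2$, $B=1$) satisfies every hypothesis of the theorem as printed, with equality in \eqref{cg}, yet $N$ has no fixed point. Hence the statement is false without an additional assumption such as continuity of $N$ (or closedness of its graph, at least along orbits); condition \eqref{cg} controls only $d\left(N(x_{k}),N(x_{k+1})\right)$ and gives no information about $d\left(N(x_{k}),N(x^{\ast})\right)$. The gap is therefore in the paper, not in your proposal: the one-line proof silently borrows from Theorem \ref{Perov} precisely the two steps where the full condition \eqref{conditie contractie matrice}, and not \eqref{cg}, is needed. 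Your corrected statement, adding continuity of $N$, together with your argument, is the appropriate repair.
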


\begin{proof}
Following the proof of Theorem \ref{Perov}, from any initial point $x_{0},$
the sequence $x_{k}=N^{k}\left( x_{0}\right) $ is convergent to a fixed
point $x^{\ast }$ of $N,$ which clearly depends on the starting point $%
x_{0}, $ but condition (\ref{cg}) is insufficient to guarantee the
uniqueness.
\end{proof}

The next result is a version for vector $B$-metric spaces of Maia's fixed
point theorem. The contraction condition on the operator is considered with
respect to a vector $B_{1}$-metric $d_{1},$ not necessarily complete, while
the convergence of the sequence of successive approximations is guaranteed
in a complete vector $B_{2}$-metric $d_{2}$ in a subordinate relationship to
$d_{1}.$

\begin{Theorem}
\label{maia} Let $X$ be a set equipped with two $%
\mathbb{R}
^{n}$-vector metrics, a $B_{1}$-metric $d_{1}$ and a $B_{2}$-metric $d_{2}$,
where $B_{2}$ is either positive or inverse-positive, and let $N\colon
X\rightarrow X$ be an operator. Assume that the following conditions hold:
\begin{enumerate}
\item[(i)] $(X,d_{1})$ is a complete vector $B_{1}$-metric space;

\item[(ii)] $d_{1}(x,y)\leq Cd_{2}(x,y)$ for all $x,y\in X$ and some matrix $%
C\in \mathcal{M}_{n\times n}\left(
\mathbb{R}
\right) ;$
\item[(iii)] There exists a matrix $A$ convergent to zero such that
\begin{equation}
d_{2}\left( N(x),N(y)\right) \leq Ad_{2}(x,y),\ \text{ for all }x,y\in X;
\label{rpma}
\end{equation}
\item[(iv)] The operator $N$ is continuous in $\left(X,d_1\right)$.
\end{enumerate}
Then, the operator $N$ has a unique fixed point.
\end{Theorem}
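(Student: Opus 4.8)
The plan is to run the argument of Theorem \ref{Perov}, but to keep carefully separate the two roles that a single metric played there: the contraction hypothesis \eqref{rpma} is expressed through $d_2$, whereas completeness and continuity are available only through $d_1$. So I would fix an arbitrary $x_0\in X$, form the sequence of successive approximations $x_k=N^k(x_0)$, and first establish a Cauchy property in $d_2$. Since $(X,d_2)$ is a vector $B_2$-metric space with $B_2$ positive or inverse-positive, and since \eqref{rpma} is precisely the Perov contraction condition \eqref{conditie contractie matrice} relative to $d_2$, the estimates in the proof of Theorem \ref{Perov} (Case (a) or Case (b) according to the type of $B_2$) apply without any change and yield that $(x_k)$ is $d_2$-Cauchy, i.e. $d_2(x_k,x_p)\to 0$ componentwise as $k,p\to\infty$.

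Next I would transfer this to $d_1$. By condition (ii), $d_1(x_k,x_p)\le C\,d_2(x_k,x_p)$, and since the right-hand side tends to the zero vector, so does the left-hand side; hence $(x_k)$ is $d_1$-Cauchy. Completeness of $(X,d_1)$ from (i) then provides a limit $x^{\ast}\in X$ with $d_1(x_k,x^{\ast})\to 0$. To identify $x^{\ast}$ as a fixed point I would invoke the continuity of $N$ in $(X,d_1)$ from (iv): it gives $d_1\bigl(N(x_k),N(x^{\ast})\bigr)\to 0$, that is $x_{k+1}\to N(x^{\ast})$ in $d_1$. As also $x_{k+1}\to x^{\ast}$ and the $d_1$-limit is unique (if $x_k\to u$ and $x_k\to v$, the triangle inequality gives $d_1(u,v)\le B_1\bigl(d_1(u,x_k)+d_1(x_k,v)\bigr)\to 0$, whence $u=v$), I conclude $N(x^{\ast})=x^{\ast}$.

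Uniqueness would follow exactly as in Theorem \ref{Perov}, and again through $d_2$ alone: if $x^{\ast\ast}$ is a second fixed point, then $d_2(x^{\ast},x^{\ast\ast})=d_2\bigl(N(x^{\ast}),N(x^{\ast\ast})\bigr)\le A\,d_2(x^{\ast},x^{\ast\ast})$, so iterating yields $d_2(x^{\ast},x^{\ast\ast})\le A^{k}d_2(x^{\ast},x^{\ast\ast})$ for every $k$; letting $k\to\infty$ and using that $A$ is convergent to zero forces $d_2(x^{\ast},x^{\ast\ast})=0$, hence $x^{\ast}=x^{\ast\ast}$ by positivity of $d_2$.

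The only genuine subtlety, and the reason hypothesis (iv) cannot be discarded, is that the contraction is assumed in $d_2$ while convergence is obtained in $d_1$: unlike in the Perov theorem, the contraction no longer makes $N$ automatically continuous for the metric in which the limit lives, so the passage to the limit in $x_{k+1}=N(x_k)$ has to be secured by the explicit $d_1$-continuity assumption. All the matrix manipulations themselves are inherited verbatim from Theorem \ref{Perov}, so I expect no computational obstacle; the care needed is purely in tracking which of $d_1$, $d_2$ each property refers to.
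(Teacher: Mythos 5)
Your proposal is correct and follows essentially the same route as the paper: Perov-type estimates in $d_2$ to get a $d_2$-Cauchy sequence (noting completeness of $(X,d_2)$ is never needed for that step), transfer to $d_1$ via (ii), completeness of $(X,d_1)$ and the $d_1$-continuity of $N$ to identify the fixed point, and the contraction in $d_2$ for uniqueness. The only cosmetic difference is in the uniqueness step, where the paper writes $(I-A)d_{2}(x^{\ast},x^{\ast\ast})\leq 0$ and invokes inverse-positivity of $I-A$, while you iterate to $d_{2}(x^{\ast},x^{\ast\ast})\leq A^{k}d_{2}(x^{\ast},x^{\ast\ast})$ and let $k\to\infty$; both are valid.
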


\begin{proof}
Let $x_{0}\in X$ be fixed, and consider the iterative sequence $%
x_{k+1}=N(x_{k})$ for $k\geq 0.$ For any $k,k_{0},p\geq 0$, applying the
triangle inequality twice and using condition (iii), we derive either
\begin{equation*}
(B_{2}^{-2}-A^{k_{0}})d_{2}(x_{k},x_{p})\leq
B_{2}^{-1}A^{k}d_{2}(x_{0},x_{k_{0}})+A^{p}d_{2}(x_{0},x_{k_{0}}),
\end{equation*}%
in case that $B_{2}$ is inverse-positive, or%
\begin{equation*}
(I-B_{2}^{2}A^{k_{0}})d_{2}(x_{k},x_{p})\leq
B_{2}A^{k}d_{2}(x_{0},x_{k_{0}})+B_{2}^{2}A^{p}d_{2}(x_{0},x_{k_{0}}),
\end{equation*}%
if $B_{2}$ is positive. Arguing similarly to the proof of Theorem \ref{Perov}%
, we deduce that $(x_{k})$ is a Cauchy sequence in $(X,d_{2})$. From (ii),
it follows immediately that $(x_{k})$ is also a Cauchy sequence in $(X,d_{1})
$, hence $(x_{k})$ is convergent with respect the metric $d_{1}$ to some $%
x^{\ast },$ that is,
\begin{equation*}
d_{1}(N(x_{k}),x^{\ast })=d_{1}\left( x_{k+1},x^{\ast }\right) \rightarrow
0,\ \ \ \text{as }k\rightarrow \infty ,
\end{equation*}%
while the continuity of $N$ yields $d_{1}(N(x^{\ast }),x^{\ast })=0$, i.e., $%
N(x^{\ast })=x^{\ast }$. To establish uniqueness, suppose that $x^{\ast \ast
}$ is another fixed point of $N,$ i.e., $N(x^{\ast \ast })=x^{\ast \ast }$.
Then, by (\ref{rpma}), one has
\begin{equation*}
(I-A)d_{2}(x^{\ast },x^{\ast \ast })\leq 0.
\end{equation*}%
Since $A$ is convergent to zero, we necessarily have $d_{2}(x^{\ast
},x^{\ast \ast })=0$, i.e., $x^{\ast }=x^{\ast \ast }$.
\end{proof}

\subsection{Error estimates}

The classical Banach and Perov fixed point theorems are accompanied by some
error estimates in terms of the contraction constant and matrix,
respectively. These estimates allow us to obtain stopping criteria for the
iterative approximation process. It is the aim of this subsection to obtain
such stopping criteria when working in vector $B$-metric spaces.

\begin{Theorem}
Assume that all the conditions of Theorem \ref{Perov} hold and let $\left(
x_{k}\right) $ be a sequence of successive approximations of the fixed point
$x^{\ast }.$

\begin{description}
\item[(1$^{0}$)] If $B$ is inverse-positive, then
\begin{equation}
\left( B^{-1}-A\right) d\left( x_{k},x^{\ast }\right) \leq A^{k}d\left(
x_{0},x_{1}\right) \ \ \ \ \left( k\geq 0\right) .  \label{rp1}
\end{equation}%
If in addition the matrix $B^{-1}-A$ is inverse-positive, then%
\begin{equation}
d\left( x_{k},x^{\ast }\right) \leq \left( B^{-1}-A\right) ^{-1}A^{k}d\left(
x_{0},x_{1}\right) \ \ \ \left( k\geq 0\right) .  \label{rp2}
\end{equation}

\item[(2$^{0}$)] If $B$ is positive, then%
\begin{equation}
\left( I-BA\right) d\left( x_{k},x^{\ast }\right) \leq BA^{k}d\left(
x_{0},x_{1}\right) \ \ \ \ \left( k\geq 0\right) .  \label{rp3}
\end{equation}%
If in addition $\ I-BA$ is inverse-positive, then%
\begin{equation}
d\left( x_{k},x^{\ast }\right) \leq \left( I-BA\right) ^{-1}BA^{k}d\left(
x_{0},x_{1}\right) \ \ \ \left( k\geq 0\right) .  \label{rp4}
\end{equation}
\end{description}
\end{Theorem}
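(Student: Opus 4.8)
The plan is to reduce all four estimates to two elementary inequalities and then exploit the monotonicity of left-multiplication by a positive matrix. First I would record the two facts that drive everything. From the contraction hypothesis \eqref{conditie contractie matrice}, an easy induction gives $d(x_k,x_{k+1})\leq A^{k}d(x_0,x_1)$ for all $k\geq 0$; and since $x^{\ast}$ is the fixed point furnished by Theorem \ref{Perov}, the same hypothesis yields $d(x_{k+1},x^{\ast})=d(N(x_k),N(x^{\ast}))\leq A\,d(x_k,x^{\ast})$.

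For part $(1^{0})$, I would apply the triangle inequality with $u=x_k$, $v=x_{k+1}$, $w=x^{\ast}$, obtaining $d(x_k,x^{\ast})\leq B\bigl(d(x_k,x_{k+1})+d(x_{k+1},x^{\ast})\bigr)$. Because $B$ is inverse-positive, $B^{-1}$ is a positive matrix, so left-multiplying by $B^{-1}$ preserves the inequality and gives $B^{-1}d(x_k,x^{\ast})\leq d(x_k,x_{k+1})+d(x_{k+1},x^{\ast})$. Substituting the two facts above and collecting the terms containing $d(x_k,x^{\ast})$ on the left yields $(B^{-1}-A)\,d(x_k,x^{\ast})\leq A^{k}d(x_0,x_1)$, which is \eqref{rp1}. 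When moreover $B^{-1}-A$ is inverse-positive, its inverse is positive, so left-multiplying \eqref{rp1} by $(B^{-1}-A)^{-1}$ preserves the order and produces \eqref{rp2}.

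Part $(2^{0})$ follows the same pattern but keeps the triangle inequality in its raw form. With $B$ positive, left-multiplication by $B$ is order-preserving, so from $d(x_k,x^{\ast})\leq B\bigl(d(x_k,x_{k+1})+d(x_{k+1},x^{\ast})\bigr)$ I would substitute the two facts to obtain $d(x_k,x^{\ast})\leq B A^{k}d(x_0,x_1)+BA\,d(x_k,x^{\ast})$; moving the last term to the left gives $(I-BA)\,d(x_k,x^{\ast})\leq B A^{k}d(x_0,x_1)$, i.e.\ \eqref{rp3}. If $I-BA$ is inverse-positive, left-multiplying by the positive matrix $(I-BA)^{-1}$ yields \eqref{rp4}.

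There is no serious obstacle here; the one point requiring care is that every passage from one vector inequality to the next relies on the monotonicity of left-multiplication by a positive matrix---precisely $B^{-1}\geq 0$ in the inverse-positive case and $B\geq 0$ in the positive case---so one must verify that the matrix doing the multiplying is genuinely nonnegative before each step. The invertibility needed to pass from \eqref{rp1} to \eqref{rp2} and from \eqref{rp3} to \eqref{rp4} is supplied by the assumed inverse-positivity of $B^{-1}-A$ and $I-BA$, respectively.
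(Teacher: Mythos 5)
Your proposal is correct and follows essentially the same route as the paper: apply the triangle inequality through $x_{k+1}$, insert the bounds $d(x_k,x_{k+1})\leq A^{k}d(x_0,x_1)$ and $d(x_{k+1},x^{\ast})\leq A\,d(x_k,x^{\ast})$, rearrange, and then invert $B^{-1}-A$ (resp. $I-BA$) using its assumed inverse-positivity. Your explicit attention to the monotonicity of left-multiplication by a nonnegative matrix is exactly the (implicit) justification in the paper's more terse argument.
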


\begin{proof}
(1$^{0})$: We have%
\begin{eqnarray*}
B^{-1}d\left( x_{k},x^{\ast }\right) &\leq &d\left( x_{k},x_{k+1}\right)
+d\left( x_{k+1},x^{\ast }\right) \\
&\leq &A^{k}d\left( x_{0},x_{1}\right) +Ad\left( x_{k},x^{\ast }\right) ,
\end{eqnarray*}%
whence we deduce (\ref{rp1}). The second part is obvious.

(2$^{0}$): We have%
\begin{eqnarray*}
d\left( x_{k},x^{\ast }\right) &\leq &Bd\left( x_{k},x_{k+1}\right)
+Bd\left( x_{k+1},x^{\ast }\right) \\
&\leq &BA^{k}d\left( x_{0},x_{1}\right) +BAd\left( x_{k},x^{\ast }\right) ,
\end{eqnarray*}%
that is (\ref{rp3}). The additional conclusion is obvious.
\end{proof}

\begin{Remark}
Clearly, since $A^{k}$ tends to the zero matrix as $k\rightarrow \infty ,$
formulas (\ref{rp2}) and (\ref{rp4}) provide stopping criteria for the
iterative fixed point approximation algorithm starting from $x_{0},$ when an
admissible error is given. It should be emphasized that these estimates are
in terms of matrices $A$ and $B$. In contrast, if we make the transition to
(scalar) $b$-metric spaces, as discussed in Section 2, the resulting
estimates will depend on the chosen norm in $\mathbb{R}^{n}$ and may vary
across different norms. So, from this point of view, the vector approach not
only unifies the results that can be obtained with the scalar method, but
also provides the best estimates.
\end{Remark}

\subsection{Stability results}

We now present two stability properties of the Perov contraction mappings in
vector $B$-metric spaces.

The first property is in the sense of Reich and Zaslavski and generalizes
the one obtained in \cite{pp} for $b$-metric spaces.

\begin{Theorem}
\label{stabilitate1} Let $(X,d)$ be a complete vector $B$-metric space, and
let $N\colon X\rightarrow X$ be an operator such that
\eqref{conditie
contractie matrice} holds with a matrix $A$ convergent to zero. In addition
assume that either

\begin{description}
\item[(a)] $B$ and $B^{-1}-A$ are inverse-positive;
\end{description}

or

\begin{description}
\item[(b)] $B$ is positive and $I-BA$ is inverse-positive.
\end{description}

Then, $N$ is stable in the sense of Reich and Zaslavski, i.e., $N$ has a
unique fixed point $x^{\ast }$, and for every sequence $(x_{k})\subset X$
satisfying
\begin{equation}
d(x_{k},N(x_{k}))\rightarrow 0\text{ }\ \ \text{as\ \ }k\rightarrow \infty ,
\label{R5}
\end{equation}%
one has
\begin{equation*}
x_{k}\rightarrow x^{\ast }\ \ \ \text{ as \ }k\rightarrow \infty .
\end{equation*}
\end{Theorem}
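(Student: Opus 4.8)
The plan is to separate the statement into its two assertions. Existence and uniqueness of the fixed point $x^{\ast}$ come for free from Theorem~\ref{Perov}, since $A$ is convergent to zero and $B$ is positive or inverse-positive in either alternative; thus only the convergence claim needs work. My strategy for that is to bound $d(x_k,x^{\ast})$ from above by a \emph{fixed} positive matrix applied to the residual $d(x_k,N(x_k))$. Because this residual tends to the zero vector by hypothesis (\ref{R5}) and the bounding matrix does not depend on $k$, componentwise convergence $x_k\to x^{\ast}$ will follow immediately. These derivations mirror the error estimates (\ref{rp1})--(\ref{rp4}), with the controllable quantity $A^{k}d(x_0,x_1)$ there replaced here by the residual $d(x_k,N(x_k))$.

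In case (a), I would apply the triangle inequality with intermediate point $N(x_k)$ to write $d(x_k,x^{\ast})\le B\bigl(d(x_k,N(x_k))+d(N(x_k),x^{\ast})\bigr)$ and then multiply on the left by the positive matrix $B^{-1}$, which preserves the order, obtaining
\begin{equation*}
B^{-1}d(x_k,x^{\ast})\le d(x_k,N(x_k))+d(N(x_k),N(x^{\ast})).
\end{equation*}
Using $x^{\ast}=N(x^{\ast})$ together with (\ref{conditie contractie matrice}) gives $d(N(x_k),N(x^{\ast}))\le A\,d(x_k,x^{\ast})$, whence $(B^{-1}-A)d(x_k,x^{\ast})\le d(x_k,N(x_k))$. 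Since $B^{-1}-A$ is inverse-positive by assumption (a), its inverse is positive, and applying it preserves the inequality to yield $d(x_k,x^{\ast})\le (B^{-1}-A)^{-1}d(x_k,N(x_k))$.

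Case (b) proceeds identically but without passing through $B^{-1}$: starting from $d(x_k,x^{\ast})\le B\,d(x_k,N(x_k))+B\,d(N(x_k),x^{\ast})$ and using the contraction condition one reaches $(I-BA)d(x_k,x^{\ast})\le B\,d(x_k,N(x_k))$, and inverse-positivity of $I-BA$ delivers $d(x_k,x^{\ast})\le (I-BA)^{-1}B\,d(x_k,N(x_k))$. In both cases the right-hand side tends to the zero vector as $k\to\infty$, giving $x_k\to x^{\ast}$.

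The only genuinely delicate point — and the one place where the structural hypotheses (a) and (b) are actually used — is the passage from an inequality of the form $Mv\le w$ to $v\le M^{-1}w$. This is legitimate precisely because inverse-positivity is equivalent to monotonicity (the characterization of inverse-positive matrices recalled in Section~2): the nonnegativity of $w-Mv$ transfers to $M^{-1}w-v$ under the positive matrix $M^{-1}$. I do not expect any further obstacle, since everything else is a direct transcription of the triangle inequality and the contraction condition.
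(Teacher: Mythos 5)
Your proposal is correct and takes essentially the same route as the paper's own proof: existence and uniqueness via Theorem \ref{Perov}, then the triangle inequality through the intermediate point $N(x_k)$ combined with the contraction condition \eqref{conditie contractie matrice} to obtain $(B^{-1}-A)\,d(x_k,x^{\ast})\leq d(x_k,N(x_k))$ in case (a) and $(I-BA)\,d(x_k,x^{\ast})\leq B\,d(x_k,N(x_k))$ in case (b), finished off by inverse-positivity and the fact that the residual tends to zero. The fixed bounding matrices you derive are exactly those appearing in the paper, so there is nothing to add.
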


\begin{proof}
According to Theorem \ref{Perov} the operator $N$ has a unique fixed point $%
x^{\ast }.$ In addition, for any sequence $\left( x_{k}\right) $ satisfying (%
\ref{R5}), in case (a), we have
\begin{align*}
B^{-1}d(x_{k},x^{\ast })& \leq d(x_{k},N(x_{k}))+d(N(x_{k}),x^{\ast }) \\
& =d(x_{k},N(x_{k}))+d(N(x_{k}),N(x^{\ast })) \\
& \leq d(x_{k},N(x_{k}))+Ad(x_{k},x^{\ast }),
\end{align*}%
that is,
\begin{equation*}
d(x_{k},x^{\ast })\leq (B^{-1}-A)^{-1}d(x_{k},N(x_{k})),
\end{equation*}%
while in case (b),%
\begin{equation*}
d(x_{k},x^{\ast })\leq \left( I-BA\right) ^{-1}Bd(x_{k},N(x_{k})).
\end{equation*}%
These estimates immediately yield the conclusion.
\end{proof}

The second stability result is in the sense of Ostrowski and extends to
vector $B$-metric spaces a similar property established in \cite{pp} for $b$%
-metric spaces.

\begin{Theorem}
\label{stabilitate2} Let $(X,d)$ be a complete vector $B$-metric space, and
let $N\colon X\rightarrow X$ be an operator. Assume $N$ satisfies
\eqref{conditie
contractie matrice} with a matrix $A$ convergent to zero. In addition,
assume that either
\begin{description}
\item[(a)] $B$ and $I-\Tilde{b}A$ are inverse-positive, where $\Tilde{b}%
=\max \{b_{ii}\,:\,i=1,2,\ldots ,n\}$;
\end{description}
or
\begin{description}
\item[(b)] $B$ is positive and $I-BA$ is inverse-positive.
\end{description}
Then, $N$ has the Ostrowski property, i.e., $N$ has a unique fixed point $%
x^{\ast }$, and for every sequence $(x_{k})\subset X$ satisfying
\begin{equation*}
d(x_{k+1},N(x_{k}))\rightarrow 0\text{ \ as \ }k\rightarrow \infty ,
\end{equation*}%
one has
\begin{equation*}
x_{k}\rightarrow x^{\ast }\ \ \ \text{ as \ }k\rightarrow \infty .
\end{equation*}
\end{Theorem}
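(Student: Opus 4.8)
The plan is to derive the existence and uniqueness of the fixed point for free, and then reduce the convergence claim to a perturbed linear recursion with a convergent-to-zero matrix. First I would invoke Theorem \ref{Perov}: in either case $B$ is positive or inverse-positive and $N$ is a Perov contraction, so $N$ has a unique fixed point $x^{\ast }$. It then remains to show that any sequence with $\varepsilon _{k}:=d(x_{k+1},N(x_{k}))\rightarrow 0$ converges to $x^{\ast }$. Writing $a_{k}:=d(x_{k},x^{\ast })$, the core idea is to produce an inequality of the shape $a_{k+1}\leq M\,a_{k}+\eta _{k}$, with $M$ a \emph{positive} matrix that is convergent to zero and $\eta _{k}\rightarrow 0$, and then to show such a recursion forces $a_{k}\rightarrow 0$.

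To obtain the recursion I would apply the triangle inequality through the intermediate point $N(x_{k})$, use $x^{\ast }=N(x^{\ast })$, and then the contraction condition \eqref{conditie contractie matrice}. In case (b), where $B$ is positive, this gives directly
\begin{equation*}
a_{k+1}=d(x_{k+1},x^{\ast })\leq B\bigl(d(x_{k+1},N(x_{k}))+d(N(x_{k}),N(x^{\ast }))\bigr)\leq BA\,a_{k}+B\varepsilon _{k},
\end{equation*}
so that $M=BA$ and $\eta _{k}=B\varepsilon _{k}$. In case (a), where $B$ is inverse-positive, I would instead use the proposition relating vector $B$-metrics to $b$-metrics (part (2$^{0}$)): there $d$ is simultaneously a vector-valued $\tilde{b}$-metric, so the scalar triangle inequality $d(u,w)\leq \tilde{b}\,(d(u,v)+d(v,w))$ yields $a_{k+1}\leq \tilde{b}A\,a_{k}+\tilde{b}\varepsilon _{k}$, giving $M=\tilde{b}A$ and $\eta _{k}=\tilde{b}\varepsilon _{k}$. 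In both cases $M$ is a positive matrix, and the standing hypothesis that $I-BA$ (resp.\ $I-\tilde{b}A$) is inverse-positive means, by the characterization of convergent-to-zero matrices, precisely that $M$ is convergent to zero; moreover $\eta _{k}\rightarrow 0$ since $\varepsilon _{k}\rightarrow 0$. Note the reason case (a) passes through $\tilde{b}$ rather than $B$ itself: multiplying an inequality by $B^{-1}$-type factors is not monotone when $B$ is merely inverse-positive, whereas the scalar $\tilde{b}$ keeps everything componentwise.

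The hard part will be passing from the recursion to $a_{k}\rightarrow 0$, and this is where I expect the only genuine care to be needed. I would fix $\delta >0$, choose $K$ with $\eta _{k}\leq \delta e$ for all $k\geq K$, and set $w_{\delta }:=\delta (I-M)^{-1}e\geq 0$, which is well defined because $M$ convergent to zero makes $I-M$ inverse-positive and satisfies $w_{\delta }=Mw_{\delta }+\delta e$. Subtracting this identity from the recursion gives, for $k\geq K$, the homogeneous bound $a_{k+1}-w_{\delta }\leq M(a_{k}-w_{\delta })$; since $M$ is positive (hence monotone) one may iterate to get $a_{K+k}-w_{\delta }\leq M^{k}(a_{K}-w_{\delta })$ for all $k\geq 0$. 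As $M^{k}\rightarrow 0_{n}$, the right-hand side tends to $0$, so $\limsup_{k}a_{k}\leq w_{\delta }=\delta (I-M)^{-1}e$ componentwise; letting $\delta \rightarrow 0$ and using $a_{k}\geq 0$ forces $a_{k}\rightarrow 0$, i.e.\ $d(x_{k},x^{\ast })\rightarrow 0$, which is exactly $x_{k}\rightarrow x^{\ast }$. (Equivalently, one could iterate the recursion to $a_{k}\leq M^{k}a_{0}+\sum_{j}M^{k-1-j}\eta _{j}$ and invoke the discrete analogue that the convolution of a summable kernel with a null sequence is null, summability of $\sum_{m}M^{m}=(I-M)^{-1}$ being guaranteed by convergence to zero.) The single subtle point in either route is the monotonicity bookkeeping, namely that every matrix multiplication preserves the componentwise inequalities, which is legitimate precisely because $M$ is positive.
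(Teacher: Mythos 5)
Your proposal is correct, and the first half coincides with the paper's proof: uniqueness and existence via Theorem \ref{Perov}, then in case (a) the one-step estimate $d(x_{k+1},x^{\ast })\leq \tilde{b}\,d(x_{k+1},N(x_{k}))+\tilde{b}A\,d(x_{k},x^{\ast })$ (using that an inverse-positive $B$ makes $d$ a vector-valued $\tilde{b}$-metric, Proposition on vector $B$-metrics, part (2$^{0}$)), and in case (b) the analogous estimate with $B$ and $BA$ using positivity of $B$; your remark about why one cannot multiply through by $B^{-1}$ in case (a) is exactly the right reason for this detour. Where you genuinely diverge is the concluding step. The paper iterates the recursion all the way down, obtaining $d(x_{k+1},x^{\ast })\leq \tilde{b}\sum_{p=0}^{k}(\tilde{b}A)^{p}d(x_{k+1-p},N(x_{k-p}))+(\tilde{b}A)^{k+1}d(x_{0},x^{\ast })$ (respectively the $BA$ analogue), and then invokes the Cauchy--Toeplitz lemma from Ortega--Rheinboldt: since $\sum_{p}M^{p}$ converges (because $M$ is positive and $I-M$ is inverse-positive, hence $M$ is convergent to zero) and the residuals tend to zero, the convolution sum tends to zero. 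You instead stop at the one-step recursion $a_{k+1}\leq Ma_{k}+\eta_{k}$ and close it with an elementary supersolution argument: $w_{\delta }=\delta (I-M)^{-1}e$ satisfies $w_{\delta }=Mw_{\delta }+\delta e$, positivity of $M$ lets you iterate $a_{K+k}-w_{\delta }\leq M^{k}(a_{K}-w_{\delta })$, and $M^{k}\rightarrow 0_{n}$ plus $\delta \rightarrow 0$ forces $a_{k}\rightarrow 0$. Both arguments rest on the same two structural facts (positivity of $M$ for monotone bookkeeping, and the equivalence of inverse-positivity of $I-M$ with convergence of $M$ to zero, from the paper's Preliminaries); your route is more self-contained, needing no external lemma, while the paper's route yields an explicit quantitative bound on $d(x_{k+1},x^{\ast })$ in terms of the residuals $d(x_{k+1-p},N(x_{k-p}))$, which has independent value as an error estimate. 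Your parenthetical alternative (unrolling plus the ``summable kernel convolved with a null sequence'' fact) is precisely the paper's argument, so you have in effect produced both proofs.
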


\begin{proof}
As previously established, the operator $N$ has a unique fixed point $%
x^{\ast }$. In case (a), we have
\begin{align*}
d(x_{k+1},x^{\ast })& \leq \Tilde{b}\,d(x_{k+1},N(x_{k}))+\Tilde{b}%
\,d(N(x_{k}),N(x^{\ast })) \\
& \leq \Tilde{b}\,d(x_{k+1},N(x_{k}))+\Tilde{b}A\,d(x_{k},x^{\ast }) \\
& \leq \ \ldots  \\
& \leq \Tilde{b}\sum_{p=0}^{k}(\Tilde{b}A)^{p}d(x_{k+1-p},N(x_{k-p}))+(%
\Tilde{b}A)^{k+1}d(x_{0},x^{\ast }),
\end{align*}%
while in case (b), similar estimation gives
\begin{equation*}
d(x_{k+1},x^{\ast })\leq
\sum_{p=0}^{k}(BA)^{p}Bd(x_{k+1-p},N(x_{k-p}))+(BA)^{k}Bd(x_{0},x^{\ast }).
\end{equation*}%

Since $I - \tilde{b}A $ is inverse-positive and $\tilde{b}A $ is positive in
the first case, and $I - BA $ is inverse-positive and $BA $ is positive in
the second case, the series $\sum_{p=0}^{k} (\tilde{b}A)^p $ and $%
\sum_{p=0}^{k} (BA)^p $ are convergent. Moreover, $(\tilde{b}A)^k $ and $%
(BA)^k $ converge to the zero matrix as $k \to \infty $.
Therefore, using the Cauchy-Toeplitz lemma (see \cite{ortega}), it follows
that $d(x_{k+1},x^{\ast })\rightarrow 0$ as $k\rightarrow \infty .$
\end{proof}

\subsection{Avramescu type fixed point theorem}

Our next result is a variant of Avramescu's fixed point theorem (see \cite%
{avramescu}) in vector $B$-metric spaces.

\begin{Theorem}[Avramescu theorem in vector $B$-metric spaces]
\label{Avramescu} Let $(X,d)$ be a complete vector $B$-metric space, $D$ a
nonempty closed convex subset of a normed space $Y,$ $N_{1}:X\times
D\rightarrow X$ and $N_{2}:X\times D\rightarrow D$ be two mappings. Assume
that the following conditions are satisfied:

\begin{enumerate}
\item[(i)] $N_{1}\left( x,.\right) $ is continuous for every $x\in X$ and
there is a matrix $A$ convergent to zero such that
\begin{equation*}
d(N_{1}(x,y),N_{1}(\overline{x},y))\leq A\,d(x,\overline{x}),
\end{equation*}%
for all $x,\overline{x}\in X$ and $y\in D;$
\item[(ii)] Either
\end{enumerate}

\begin{description}
\item[(a)] $B$ and $B^{-1}-A$ is inverse-positive;
\end{description}

or

\begin{description}
\item[(b)] $B$ is positive and $I-BA$ is inverse-positive.
\end{description}

\begin{enumerate}
\item[(iii)] $N_{2}$ is continuous and $N_{2}(X\times D)$ is a relatively
compact subset of $Y$ .
\end{enumerate}
Then, there exists $(x^{\ast },y^{\ast })\in X\times D$ such that
\begin{equation*}
N_{1}(x^{\ast },y^{\ast })=x^{\ast },\quad N_{2}(x^{\ast },y^{\ast
})=y^{\ast }.
\end{equation*}
\end{Theorem}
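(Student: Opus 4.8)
The plan is to combine the Perov fixed point theorem (Theorem \ref{Perov}) with the classical Schauder fixed point theorem, which is the standard strategy behind Avramescu-type results. First I would freeze the second variable: for each fixed $y\in D$, condition (i) shows that the operator $x\mapsto N_{1}(x,y)$ maps $X$ into itself and satisfies the Perov contraction condition \eqref{conditie contractie matrice} with the same convergent-to-zero matrix $A$. Since by (ii) the matrix $B$ is either inverse-positive or positive, Theorem \ref{Perov} applies and yields a unique fixed point, which I denote $\tau(y)\in X$, characterized by $N_{1}(\tau(y),y)=\tau(y)$. This defines a map $\tau\colon D\to X$, and the whole problem reduces to producing a $y^{\ast}\in D$ with $N_{2}(\tau(y^{\ast}),y^{\ast})=y^{\ast}$, since then $x^{\ast}:=\tau(y^{\ast})$ settles both equations.

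The technical heart of the argument is to show that $\tau$ is continuous from $D$ (with the norm of $Y$) to $(X,d)$ in the sequential sense. Given $y_{n}\to y$ in $D$, I would introduce the intermediate point $z_{n}:=N_{1}(\tau(y),y_{n})$ and apply the triangle inequality of the vector $B$-metric, $d(\tau(y_{n}),\tau(y))\leq B\bigl(d(\tau(y_{n}),z_{n})+d(z_{n},\tau(y))\bigr)$, together with the contraction estimate $d(\tau(y_{n}),z_{n})=d(N_{1}(\tau(y_{n}),y_{n}),N_{1}(\tau(y),y_{n}))\leq A\,d(\tau(y_{n}),\tau(y))$. In case (b) this rearranges to $(I-BA)\,d(\tau(y_{n}),\tau(y))\leq B\,d(z_{n},\tau(y))$, and inverse-positivity of $I-BA$ gives $d(\tau(y_{n}),\tau(y))\leq (I-BA)^{-1}B\,d(z_{n},\tau(y))$. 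In case (a), multiplying the triangle inequality on the left by the positive matrix $B^{-1}$ first yields $(B^{-1}-A)\,d(\tau(y_{n}),\tau(y))\leq d(z_{n},\tau(y))$, whence $d(\tau(y_{n}),\tau(y))\leq (B^{-1}-A)^{-1}d(z_{n},\tau(y))$. In both cases the right-hand side tends to $0$ because $z_{n}=N_{1}(\tau(y),y_{n})\to N_{1}(\tau(y),y)=\tau(y)$, by the continuity of $N_{1}(\tau(y),\cdot)$ asserted in (i); this forces $d(\tau(y_{n}),\tau(y))\to 0$, proving continuity of $\tau$. This is essentially the same manipulation already used in the stability Theorems \ref{stabilitate1} and \ref{stabilitate2}.

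With $\tau$ continuous, I would define $T\colon D\to D$ by $T(y):=N_{2}(\tau(y),y)$. This is well defined since $N_{2}$ takes values in $D$, and it is continuous: if $y_{n}\to y$, then $\tau(y_{n})\to\tau(y)$ in $(X,d)$ by the previous step, and the (sequential) joint continuity of $N_{2}$ from (iii) gives $N_{2}(\tau(y_{n}),y_{n})\to N_{2}(\tau(y),y)$. Moreover $T(D)\subseteq N_{2}(X\times D)$ is relatively compact in $Y$ by (iii), and $D$ is nonempty, closed and convex. Hence Schauder's fixed point theorem supplies a point $y^{\ast}\in D$ with $T(y^{\ast})=y^{\ast}$, i.e. $N_{2}(\tau(y^{\ast}),y^{\ast})=y^{\ast}$. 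Setting $x^{\ast}=\tau(y^{\ast})$, the defining property $N_{1}(\tau(y^{\ast}),y^{\ast})=\tau(y^{\ast})$ gives $N_{1}(x^{\ast},y^{\ast})=x^{\ast}$, and the proof is complete.

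The main obstacle I anticipate is precisely the continuity of $\tau$, since this is the only place where the matrix hypotheses in (ii) are genuinely needed and where one must handle the $B$-metric triangle inequality with matrix coefficients rather than a scalar; care is required to multiply only by the nonnegative matrices $B^{-1}$, $(B^{-1}-A)^{-1}$, or $(I-BA)^{-1}$ so that the vector inequalities are preserved. The remaining ingredients—invoking Perov and invoking Schauder—are routine once $\tau$ is shown to be well defined and continuous.
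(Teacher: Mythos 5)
Your proposal is correct and follows essentially the same route as the paper: freeze $y$, apply Theorem \ref{Perov} to get the fixed-point map (your $\tau$, the paper's $S$), prove its continuity via the same intermediate point $N_{1}(\tau(y),y_{n})$ and the same case-(a)/(b) matrix manipulations yielding $(B^{-1}-A)^{-1}$ and $(I-BA)^{-1}B$ bounds, and then apply Schauder to $y\mapsto N_{2}(\tau(y),y)$. No gaps; the argument matches the paper's proof step for step.
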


\begin{proof}
For each $y\in D$, Theorem \ref{Perov} applies to the operator $N_{1}\left(
.,y\right) $ and gives a unique $S(y)\in X$ such that
\begin{equation}
N_{1}(S(y),y)=S(y).  \label{R2}
\end{equation}%
We claim that the mapping $S:D\rightarrow X$ is continuous. To prove this,
let $y,\overline{y}\in D$. In case (a), we have
\begin{align*}
B^{-1}d(S(y),S(\overline{y}))& =B^{-1}d\left( N_{1}\left( S(y),y\right)
,N_{1}\left( S(\overline{y}),\overline{y}\right) \right)  \\
& \leq d\left( N_{1}\left( S(y),y\right) ,N_{1}\left( S(\overline{y}%
),y\right) \right) +d\left( N_{1}\left( S(\overline{y}),y\right)
,N_{1}\left( S(\overline{y}),\overline{y}\right) \right)  \\
& \leq Ad\left( S(y),S(\overline{y})\right) +d\left( N_{1}\left( S(\overline{%
y}),y\right) ,N_{1}\left( S(\overline{y}),\overline{y}\right) \right) ,
\end{align*}%
which implies
\begin{equation*}
\left( B^{-1}-A\right) d(S(y),S(\overline{y}))\leq d\left( N_{1}\left( S(%
\overline{y}),y\right) ,N_{1}\left( S(\overline{y}),\overline{y}\right)
\right) ,
\end{equation*}%
while in case (b), one has
\begin{equation*}
\left( I-BA\right) d(S(y),S(\overline{y}))\leq Bd\left( N_{1}\left( S(%
\overline{y}),y\right) ,N_{1}\left( S(\overline{y}),\overline{y}\right)
\right) .
\end{equation*}%
Since $B^{-1}-A$ and $I-BA$ are inverse-positive, respectively, in case (a),
we deduce that
\begin{equation}
d(S(y),S(\overline{y}))\leq \left( B^{-1}-A\right) ^{-1}d\left( N_{1}\left(
S(\overline{y}),y\right) ,N_{1}\left( S(\overline{y}),\overline{y}\right)
\right) ,  \label{avramescu case a}
\end{equation}%
and in case (b),%
\begin{equation}
d(S(y),S(\overline{y}))\leq \left( I-BA\right) ^{-1}Bd\left( N_{1}\left( S(%
\overline{y}),y\right) ,N_{1}\left( S(\overline{y}),\overline{y}\right)
\right) .  \label{avramescu case b}
\end{equation}%
Then, for any convergent sequence $\left( y_{k}\right) \subset D,$ $%
y_{k}\rightarrow y^{\ast }$ as $k\rightarrow \infty ,$ the continuity of $%
N_{1}\left( S\left( y^{\ast }\right) ,.\right) $ together with relations %
\eqref{avramescu case a} and \eqref{avramescu case b} implies that $%
d(S(y_{k}),S(y^{\ast }))\rightarrow 0$ as $k\rightarrow \infty .$
Thus, $S$ is continuous, and since $N_{2}$ is continuous, the composed
mapping
\begin{equation*}
N_{2}(S(.),.):D\rightarrow D
\end{equation*}%
is continuous too. Since its range is relatively compact by condition (iii),
Schauder's fixed point theorem applies and guarantees the existence of a
point $y^{\ast }\in D$ such that
\begin{equation}
N_{2}\left( S(y^{\ast }),y^{\ast }\right) =y^{\ast }.  \label{R3}
\end{equation}%
Finally, denoting $x^{\ast }:=S(y^{\ast }),$ from (\ref{R2}) and (\ref{R3})
we have the conclusion.
\end{proof}

\begin{Remark}
Without the invariance condition $N_{2}\left( X\times D\right) \subset D,$ a
similar result holds if $D$ is a closed ball $B_{R}$ centered at the origin
and of radius $R$ in the space $\left( Y,\left\Vert .\right\Vert \right) ,$
provided that Schaefer's fixed point theorem is used instead of Schauder's
theorem. In this case, in addition to conditions (i) and (ii), we need the
Leray-Schauder condition
\begin{equation*}
y\neq \lambda N_{2}\left( x,y\right) ,
\end{equation*}%
\ for all $x\in X,$ $y\in Y$ with $\left\Vert y\right\Vert =R,$ and $\lambda
\in \left( 0,1\right) .$
\end{Remark}

In particular, for scalar $b$-metric spaces, conditions (a) and (b) from
hypothesis (ii) of Theorem \ref{Avramescu} are the same and reduce to the
unique requirement that the product of $b$ and the Lipschitz constant $a$ of
$N$ is less than one. More exactly, Theorem \ref{Avramescu} reads as follows.

\begin{Theorem}[Avramescu theorem in $b$-metric spaces]
Let $(X,\rho )$ be a complete $b$-metric space $\left( b\geq 1\right) $, $D$
a nonempty closed convex subset of a normed space $Y,$ $N_{1}:X\times
D\rightarrow X$ and $N_{2}:X\times D\rightarrow D$ be two mappings. Assume
that the following conditions are satisfied:

\begin{enumerate}
\item[(i)] $N_{1}\left( x,.\right) $ is continuous for every $x\in X$ and
there is a constant $a\geq 0$ such that
\begin{equation*}
\rho (N_{1}(x,y),N_{2}(\overline{x},y))\leq a\rho (x,\overline{x}),
\end{equation*}%
for all $x,\overline{x}\in X$ and $y\in D;$

\item[(ii)] $ab<1;$

\item[(iii)] $N_{2}$ is continuous and $N_{2}(X\times D)$ is a relatively
compact subset of $Y$ .
\end{enumerate}

Then, there exists $(x^{\ast },y^{\ast })\in X\times D$ such that $\
N_{1}(x^{\ast },y^{\ast })=x^{\ast }$ and $\ N_{2}(x^{\ast },y^{\ast
})=y^{\ast }.$
\end{Theorem}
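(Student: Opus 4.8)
The plan is to derive this statement directly from Theorem~\ref{Avramescu} by specializing to dimension $n=1$, in which a scalar $b$-metric $\rho$ is, by Definition~\ref{definitie B metrica vectiriala}, precisely a vector $B$-metric with the $1\times 1$ matrix $B=(b)$. First I would note that the continuity-plus-Lipschitz hypothesis (i) and the compactness hypothesis (iii) of the present statement are verbatim the $n=1$ forms of conditions (i) and (iii) of Theorem~\ref{Avramescu}. Consequently the whole task reduces to checking that the single scalar inequality $ab<1$ reproduces hypothesis (ii) of that theorem.

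To see this, I would specialize the matrix conditions. With $b\geq 1$ the matrix $B=(b)$ is simultaneously positive (its entry is nonnegative) and inverse-positive (it is invertible with positive inverse $(1/b)$); recall that in dimension one ``inverse-positive'' means exactly ``a strictly positive real number''. Writing $A=(a)$, condition (a) requires $B^{-1}-A=1/b-a$ to be inverse-positive, that is strictly positive, which is precisely $ab<1$; likewise condition (b) requires $I-BA=1-ab$ to be strictly positive, again exactly $ab<1$. Moreover $b\geq 1$ together with $ab<1$ forces $a<1$, so the $1\times1$ matrix $A=(a)$ is convergent to zero, as Theorem~\ref{Avramescu} demands. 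Thus both alternatives of hypothesis (ii) hold and coincide with the familiar product condition $ab<1$.

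With every hypothesis of Theorem~\ref{Avramescu} verified, I would invoke that theorem to obtain the pair $(x^{\ast},y^{\ast})\in X\times D$ satisfying $N_1(x^{\ast},y^{\ast})=x^{\ast}$ and $N_2(x^{\ast},y^{\ast})=y^{\ast}$. I do not anticipate any genuine obstacle here: the only real content is the bookkeeping remark that, in one dimension, the inverse-positivity of $B^{-1}-A$ and of $I-BA$ both collapse to $ab<1$, which in turn secures $a<1$. For a self-contained alternative one may simply rerun the argument of Theorem~\ref{Avramescu} scalarly: for each fixed $y\in D$ the map $N_1(\cdot,y)$ is an $a$-contraction on the complete $b$-metric space $(X,\rho)$, so Theorem~\ref{Perov} with $n=1$ yields a unique fixed point $S(y)$; the estimate $\rho(S(y),S(\overline{y}))\leq (1-ab)^{-1}b\,\rho(N_1(S(\overline{y}),y),N_1(S(\overline{y}),\overline{y}))$ gives the continuity of $S$, and Schauder's theorem applied to the continuous, relatively compact self-map $N_2(S(\cdot),\cdot)$ of $D$ produces $y^{\ast}$, whence $x^{\ast}:=S(y^{\ast})$.
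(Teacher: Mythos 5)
Your proposal is correct and takes essentially the same route as the paper: the paper states this theorem precisely as the $n=1$ specialization of Theorem~\ref{Avramescu}, noting that conditions (a) and (b) of hypothesis (ii) coincide in the scalar case and reduce to $ab<1$. Your added bookkeeping — that $B=(b)$ is both positive and inverse-positive, and that $ab<1$ together with $b\geq 1$ forces $a<1$, so $A=(a)$ is convergent to zero — just makes explicit the details the paper leaves implicit.
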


\section{Ekeland's principle and Caristi's fixed point theorem in vector $B$%
-metric spaces}

\subsection{Classical results}

We first recall for comparison the classical results in metric spaces (see,
\cite{ekeland,mawhin,f,meghea}).

\begin{Theorem}[Weak Ekeland variational principle]
\label{te}Let $\ \left( X,\rho \right) $ be a complete metric space and let $%
\ f:X\rightarrow \mathbf{%
\mathbb{R}
}$ be a lower semicontinuous function bounded from below. Then, for given $\
\varepsilon >0$ and $\ x_{0}\in X,$ there exists a point $\ x^{\ast }\in X$
such that%
\begin{equation*}
f\left( x^{\ast }\right) \leq f\left( x_{0}\right) -\varepsilon \rho \left(
x^{\ast },x_{0}\right)
\end{equation*}%
and%
\begin{equation*}
f\left( x^{\ast }\right) <f\left( x\right) +\varepsilon \rho \left( x^{\ast
},x\right) \ \ \ \text{for all \ }x\in X,\ x\neq x^{\ast }.
\end{equation*}
\end{Theorem}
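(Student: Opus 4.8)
The plan is to introduce on $X$ the partial order defined by $y \preceq x$ if and only if $f(y) + \varepsilon \rho(x,y) \le f(x)$; reflexivity and transitivity are immediate, and antisymmetry follows because $y \preceq x$ together with $x \preceq y$ forces $\rho(x,y)=0$. The point $x^{\ast}$ we seek will be a minimal element of this order lying below $x_{0}$: indeed, $x^{\ast} \preceq x_{0}$ is exactly the first asserted inequality, while minimality of $x^{\ast}$ (the absence of any $x \ne x^{\ast}$ with $x \preceq x^{\ast}$) is precisely the strict inequality $f(x^{\ast}) < f(x) + \varepsilon \rho(x^{\ast},x)$ for all $x \ne x^{\ast}$.

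To produce such a point I would construct a sequence rather than invoke Zorn's lemma. Setting $S_{k} := \{x \in X : x \preceq x_{k}\}$, each $S_{k}$ is nonempty (it contains $x_{k}$) and, by lower semicontinuity of $f$ and continuity of $\rho$, is closed. Starting from $x_{0}$, I would choose $x_{k+1} \in S_{k}$ with $f(x_{k+1}) \le \inf_{S_{k}} f + 2^{-(k+1)}$. Since $x_{k+1} \preceq x_{k}$ gives $\varepsilon \rho(x_{k+1},x_{k}) \le f(x_{k}) - f(x_{k+1})$, the values $f(x_{k})$ decrease and, being bounded from below, converge; telescoping then yields $\sum_{k} \rho(x_{k+1},x_{k}) < \infty$, so $(x_{k})$ is Cauchy and converges to some $x^{\ast} \in X$ by completeness.

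Two limiting arguments finish the proof. Passing to the limit in $x_{k} \preceq x_{0}$ — using $f(x^{\ast}) \le \liminf_{k} f(x_{k})$ from lower semicontinuity and $\rho(x_{k},x_{0}) \to \rho(x^{\ast},x_{0})$ from continuity — gives $x^{\ast} \preceq x_{0}$, the first conclusion. For minimality, the key preliminary fact is that $x^{\ast} \preceq x_{k}$ for every $k$, obtained by letting $m \to \infty$ in $x_{m} \preceq x_{k}$ for $m \ge k$. The main obstacle is then ruling out a strict predecessor: if $x \preceq x^{\ast}$, transitivity gives $x \in S_{k}$ for all $k$, so $f(x) \ge \inf_{S_{k}} f \ge f(x_{k+1}) - 2^{-(k+1)}$, and combining this with $x \preceq x_{k+1}$ forces $\varepsilon \rho(x,x_{k+1}) \le f(x_{k+1}) - f(x) \le 2^{-(k+1)} \to 0$; hence $x_{k+1} \to x$, and uniqueness of limits gives $x = x^{\ast}$. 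Thus $x^{\ast}$ is minimal, which is exactly the second assertion. The delicate point throughout is the quantitative choice $f(x_{k+1}) \le \inf_{S_{k}} f + 2^{-(k+1)}$: it is simultaneously what drives the Cauchy property and what squeezes any candidate predecessor onto the limit.
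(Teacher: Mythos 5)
Your proof is correct: it is the classical order-theoretic (Bishop--Phelps/Ekeland) argument, built on the relation $y\preceq x$ iff $f(y)+\varepsilon\rho(x,y)\le f(x)$, approximate minimizers $f(x_{k+1})\le \inf_{S_k}f+2^{-(k+1)}$, a Cauchy-sequence passage to the limit, and the squeezing argument that forces any predecessor of $x^{\ast}$ to coincide with $x^{\ast}$. Note, however, that the paper never proves Theorem \ref{te} itself --- it is recalled with citations --- and the proof the paper actually gives is for its generalization, Theorem \ref{ep}, whose scalar case ($n=1$, $b=1$) recovers the classical statement. That proof takes a genuinely different route: the sets $F(x_k):=\{x\in F(x_{k-1}):\ f(x)+d(x,x_k)\le f(x_k)\}$ are nested \emph{by definition} (each is cut out of the preceding one), their diameters are shown to tend to zero, Cantor's intersection lemma produces $x^{\ast}$, and the separation property is obtained by a case analysis locating the first set $F(x_k)$ to which a given $x\neq x^{\ast}$ fails to belong. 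The source of the discrepancy is exactly your use of transitivity of $\preceq$, which requires the triangle inequality with constant exactly $1$: in a $b$-metric or vector $B$-metric space transitivity fails, the sets $\{x: x\preceq x_k\}$ need no longer be nested, and only the weaker, sequence-dependent conclusions (\ref{c2})--(\ref{c3}) survive. So your argument buys the sharp classical conclusion $f(x^{\ast})<f(x)+\varepsilon\rho(x^{\ast},x)$ for all $x\neq x^{\ast}$ in one stroke, but does not extend to the setting the paper is concerned with, whereas the paper's nesting-plus-Cantor construction is engineered precisely to avoid transitivity; both share the same quantitative engine, your $2^{-(k+1)}$-approximate minimization playing the role of the paper's condition (H) with $\varepsilon_k\to 0$.
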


\begin{Theorem}[Strong Ekeland variational principle]
Let $(X,\rho )$ be a complete metric space, and let $f:X\rightarrow \mathbb{R%
}$ be a lower semicontinuous function that is bounded from below. For given $%
\varepsilon >0$, $\delta >0$, and $x_{0}\in X$ satisfying
\begin{equation*}
f(x_{0})\leq \inf_{x\in X}f(x)+\varepsilon ,
\end{equation*}%
there exists a point $x^{\ast }\in X$ such that the following hold:
\begin{align*}
& f(x^{\ast })\leq f(x_{0}), \\
& \rho (x^{\ast },x_{0})\leq \delta , \\
& f(x^{\ast })<f(x)+\frac{\varepsilon }{\delta }\rho (x^{\ast },x)\ \,\,\
\text{for all }x\in X,\ x\neq x^{\ast }.
\end{align*}
\end{Theorem}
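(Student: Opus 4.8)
The plan is to derive the strong principle directly from the weak Ekeland variational principle (Theorem~\ref{te}), invoking it with a rescaled coefficient and then using the near-minimality of $x_0$ to control the distance $\rho(x^*,x_0)$.

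First I would apply Theorem~\ref{te} to the same function $f$ and the same base point $x_0$, but with the positive number $\varepsilon/\delta$ playing the role of $\varepsilon$. Since $\varepsilon,\delta>0$, this is legitimate and produces a point $x^*\in X$ satisfying
\[
f(x^*)\leq f(x_0)-\frac{\varepsilon}{\delta}\,\rho(x^*,x_0)
\]
together with
\[
f(x^*)<f(x)+\frac{\varepsilon}{\delta}\,\rho(x^*,x)\qquad\text{for all }x\in X,\ x\neq x^*.
\]
The second relation is precisely the third asserted inequality, while discarding the nonnegative term $\frac{\varepsilon}{\delta}\rho(x^*,x_0)$ in the first relation yields $f(x^*)\leq f(x_0)$, the first conclusion.

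The only remaining and substantive step would be the distance estimate $\rho(x^*,x_0)\leq\delta$, and this is exactly where I would invoke the hypothesis $f(x_0)\leq\inf_X f+\varepsilon$. Rearranging the first relation I would write $\frac{\varepsilon}{\delta}\rho(x^*,x_0)\leq f(x_0)-f(x^*)$; then, bounding $f(x^*)\geq\inf_X f$ and using the near-minimality assumption, the right-hand side is at most $(\inf_X f+\varepsilon)-\inf_X f=\varepsilon$, so that $\frac{\varepsilon}{\delta}\rho(x^*,x_0)\leq\varepsilon$ and hence $\rho(x^*,x_0)\leq\delta$.

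I expect the argument to be short and essentially free of obstacles: the boundedness of $f$ from below is what makes $\inf_X f$ finite and the chain of estimates meaningful, and the only genuine decision is the choice of rescaling factor $\varepsilon/\delta$, which is forced by the coefficient appearing in the target strict inequality. Thus the strong principle requires no new machinery beyond the weak one, once this scaling is recognized.
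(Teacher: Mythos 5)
Your proof is correct. The paper states this classical strong principle without proof (it is only recalled from the literature for comparison), so there is no internal argument to measure against; your derivation---invoking the weak principle (Theorem~\ref{te}) with the rescaled coefficient $\varepsilon/\delta$ at the same base point $x_{0}$, then using $f(x^{\ast})\geq \inf_{X}f$ together with the near-minimality hypothesis to obtain $\frac{\varepsilon}{\delta}\rho(x^{\ast},x_{0})\leq\varepsilon$---is exactly the standard reduction of the strong form to the weak form and is gap-free.
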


Below, we have a version of Ekeland's variational principle for scalar $b$%
-metric spaces (see, \cite{bmv}).

\begin{Theorem}[\protect\cite{bmv}]
\label{bmv} Let $(X,\rho )$ be a complete $b$-metric space with $b>1$, where
the $b$-metric $\rho $ is continuous. Let $f:X\rightarrow \mathbb{R}$ be a
lower semicontinuous function bounded from below. For a given $\varepsilon >0
$ and $x_{0}\in X$ satisfying
\begin{equation*}
f(x_{0})\leq \inf_{x\in X}f(x)+\varepsilon ,
\end{equation*}%
there exists a sequence $(x_{k})\subset X$ and a point $x^{\ast }\in X$ such
that:
\begin{align*}
& x_{k}\rightarrow x^{\ast }\quad \text{as }k\rightarrow \infty , \\
& \rho (x^{\ast },x_{k})\leq \frac{\varepsilon }{2^{k}},\quad k\in \mathbb{N}%
, \\
& f(x^{\ast })\leq f(x_{0})-\sum_{k=0}^{\infty }\frac{1}{b^{k}}\rho (x^{\ast
},x_{k}), \\
& f(x^{\ast })+\sum_{k=0}^{\infty }\frac{1}{b^{k}}\rho (x^{\ast
},x_{k})<f(x)+\sum_{k=0}^{\infty }\frac{1}{b^{k}}\rho (x,x_{k}),\quad \text{%
for }x\neq x^{\ast }.
\end{align*}
\end{Theorem}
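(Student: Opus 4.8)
The plan is to reconstruct the sequence $(x_k)$ and the limit point $x^{\ast}$ by Ekeland's classical iterative selection, but driven by the \emph{growing perturbed functionals}
\[
F_k(x) := f(x) + \sum_{j=0}^{k-1}\frac{1}{b^{j}}\,\rho(x,x_j), \qquad k\ge 1, \quad F_0:=f,
\]
so that the infinite-sum functional appearing in the conclusion, $\Phi(x):=f(x)+\sum_{k\ge 0} b^{-k}\rho(x,x_k)$, emerges as the pointwise increasing limit $F_k\uparrow\Phi$. Each $F_k$ is lower semicontinuous (a sum of the l.s.c. $f$ and the continuous terms $b^{-j}\rho(\cdot,x_j)$) and bounded below, so approximate minimizers exist. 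Starting from the given $x_0$ with $f(x_0)\le\inf f+\varepsilon$, I would choose $x_k$ inductively as a near-minimizer,
\[
F_k(x_k)\le\inf_{X}F_k+\theta_k,
\]
with tolerances $\theta_k$ to be fixed as a rapidly decaying geometric sequence.

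The estimates would then come from comparing consecutive functionals. Since $F_{k+1}=F_k+b^{-k}\rho(\cdot,x_k)\ge F_k$, the infima increase, while $\inf F_{k+1}\le F_{k+1}(x_k)=F_k(x_k)\le\inf F_k+\theta_k$ (using $\rho(x_k,x_k)=0$); combining this with $F_{k+1}(x_{k+1})=F_k(x_{k+1})+b^{-k}\rho(x_{k+1},x_k)\ge\inf F_k+b^{-k}\rho(x_{k+1},x_k)$ and $F_{k+1}(x_{k+1})\le\inf F_{k+1}+\theta_{k+1}$ yields the key inequality
\[
\frac{1}{b^{k}}\,\rho(x_{k+1},x_k)\le\theta_k+\theta_{k+1}.
\]
Choosing $\theta_k$ of order $(2b)^{-k}$ turns this into a genuine geometric decay of the consecutive distances; telescoping the functional side (starting from $F_1(x_1)\le F_1(x_0)=f(x_0)+\theta_1$) gives $F_k(x_k)\le f(x_0)+\sum_j\theta_j$, the seed for the bound $\Phi(x^{\ast})\le f(x_0)$.

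I would next establish convergence and pass to the limit. Feeding the geometric distance decay into the relaxed triangle inequality shows $(x_k)$ is Cauchy, hence convergent by completeness to some $x^{\ast}$, with the prescribed bound $\rho(x^{\ast},x_k)\le\varepsilon/2^k$; since $b>1$, this bound also makes $\sum_k b^{-k}\rho(x^{\ast},x_k)$ convergent, so $\Phi(x^{\ast})$ is finite. Lower semicontinuity of $f$ together with continuity of $\rho$ then give $\Phi(x^{\ast})\le\liminf_k F_k(x_k)$, which combined with the telescoped bound yields $\Phi(x^{\ast})\le f(x_0)$ (the third assertion), and combined with $F_k(x_k)\le\inf F_k+\theta_k\le F_k(x)+\theta_k\le\Phi(x)+\theta_k$ and $\theta_k\to 0$ yields the minimality $\Phi(x^{\ast})\le\Phi(x)$ for every $x$; strictness for $x\ne x^{\ast}$ would follow because $\rho(x,x_k)\to\rho(x,x^{\ast})>0$ while $\rho(x^{\ast},x_k)\to 0$, so equality in $\Phi$ is incompatible with $x\ne x^{\ast}$.

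The step I expect to be the main obstacle is exactly the interference of the constant $b>1$ with the geometry of the construction. In an ordinary metric space one runs the selection through a transitive partial order and a nested family of closed sublevel sets whose radii shrink; under the relaxed triangle inequality this nestedness fails, since each use of the triangle inequality injects a factor $b$ that the weights $b^{-k}$ cannot absorb. Consequently the sharp bound $\rho(x^{\ast},x_k)\le\varepsilon/2^k$ and the Cauchy property cannot be read off from shrinking diameters but must be extracted directly from the summable perturbation structure, forcing a careful calibration of the tolerances $\theta_k$ against both the weights $b^{-k}$ and the $b$-factors accumulated by iterated triangle inequalities. Guaranteeing that a single choice of $(\theta_k)$ simultaneously delivers Cauchyness, the precise geometric distance bound, the sharp constant $f(x_0)$, and the \emph{strict} minimality of $\Phi$ after passing to the limit is where the real work lies.
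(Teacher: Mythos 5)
First, a point of comparison: the paper itself does not prove this statement at all. Theorem \ref{bmv} is quoted from \cite{bmv}, and the only thing the paper records about its proof is that it rests on Cantor's intersection lemma for $b$-metric spaces (Lemma \ref{Cantor}); the paper's own vector analogue, Theorem \ref{ep}, is proved by exactly that mechanism --- a descending sequence of closed sets with shrinking diameters, whose unique intersection point is $x^{\ast}$. Your route, via unconstrained near-minimizers of the growing penalized functionals $F_k$, is genuinely different, but it has gaps that are structural rather than matters of calibration.

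The most serious gap concerns the third conclusion, where the constant $f(x_0)$ is exact. Your own telescoping yields only $F_k(x_k)\le f(x_0)+\sum_j\theta_j$, hence at best $\Phi(x^{\ast})\le f(x_0)+\sum_j\theta_j$. That slack is strictly positive (the $\theta_j$ must be positive because the infima of the $F_k$ need not be attained), and it cannot be removed afterwards by any limiting argument: the sequence $(x_k)$, and with it $\Phi$ and $x^{\ast}$, is already fixed once the $\theta_j$ are chosen. Tolerance-free inequalities such as $f(x^{\ast})+\sum_k b^{-k}\rho(x^{\ast},x_k)\le f(x_0)$ come precisely from chained set membership --- $x^{\ast}$ lying in $\{x\,:\,f(x)+\rho(x,x_0)\le f(x_0)\}$ and in its successors --- i.e., from constraining each $x_{k+1}$ to the sublevel set $\{x\,:\,F_{k+1}(x)\le F_k(x_k)\}$ rather than minimizing over all of $X$. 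But that repair turns your construction back into the nested-set/Cantor proof of \cite{bmv}.

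Two further steps fail as written. First, the last conclusion asserts exactly that $x^{\ast}$ is the \emph{unique} minimizer of $\Phi$, and your justification (``$\rho(x,x_k)\to\rho(x,x^{\ast})>0$ while $\rho(x^{\ast},x_k)\to 0$, so equality is incompatible'') is not an argument: minimality of $\Phi$ at $x^{\ast}$ does not exclude a second minimizer. A genuine argument within your framework would run: if $\Phi(y)\le\Phi(x_m)$ for all $m$, then $\inf F_m+b^{-m}\rho(y,x_m)\le\Phi(y)\le F_m(x_m)+\tau_m\le\inf F_m+\theta_m+\tau_m$, where $\tau_m=\sum_{k\ge m}b^{-k}\rho(x_m,x_k)$, so $\rho(y,x_m)\le b^m(\theta_m+\tau_m)\to 0$ and $y=x^{\ast}$; this, however, needs a quantitative Cauchy bound on $\tau_m$, which brings in the second failure. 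With your calibration $\theta_k\sim(2b)^{-k}$ you get $\rho(x_{k+1},x_k)\lesssim 2^{-k}$, and iterating the relaxed triangle inequality multiplies the $j$-th term by $b^{j}$, so the majorizing series has terms of order $b^{j}2^{-j}$ and diverges whenever $b\ge 2$: Cauchyness does not follow by ``feeding the decay into the relaxed triangle inequality.'' You must either invoke the nontrivial lemma that geometric decay at \emph{any} ratio forces Cauchyness in a $b$-metric space (Miculescu--Mihail \cite{Miculescu}, Suzuki \cite{Suzuki}), which you neither cite nor prove, or strengthen the calibration to $\theta_k\sim(2b^{2})^{-k}$, which also leaves room for the exact bound $\rho(x^{\ast},x_k)\le\varepsilon/2^{k}$. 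Until the exact-constant issue and the uniqueness argument are supplied, the proposal proves a weakened statement (minimality of $\Phi$ up to $\sum_j\theta_j$, without strictness), not Theorem \ref{bmv}.
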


The proof of Theorem \ref{bmv} in \cite{bmv} is based on the version for
scalar $b$-metric spaces of Cantor's intersection lemma.

\begin{Lemma}[\protect\cite{bmv}]
\label{Cantor} Let $\left( X,\rho \right) $ be a complete $b$-metric space.
For every descending sequence $\left( F_{k}\right) _{k\geq 1}$ of nonempty
closed subsets of $X$ with \textrm{diam}$_{\rho }\left( F_{k}\right)
\rightarrow 0$ as $k\rightarrow \infty ,$ the intersection $%
\bigcap\limits_{k=1}^{\infty }F_{k}$ contains one and only one element.
\end{Lemma}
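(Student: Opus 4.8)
The plan is to run the classical Cantor intersection argument, taking care to invoke only those notions (Cauchy sequence, convergence, closedness) that have been given sequential definitions for $b$-metric spaces in the Preliminaries. First I would select, for each $k\geq 1$, a point $x_k\in F_k$, which is possible since every $F_k$ is nonempty. Because the family $(F_k)$ is descending, for any $m,n\geq k$ both $x_m$ and $x_n$ lie in $F_k$, so that $\rho(x_m,x_n)\leq \mathrm{diam}_{\rho}(F_k)$. As $\mathrm{diam}_{\rho}(F_k)\to 0$, this shows at once that $(x_k)$ is Cauchy; it is worth noting that this step does not even use the $b$-relaxed triangle inequality, since the bound follows directly from the definition of the diameter.

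Next, completeness of $X$ furnishes a limit $x^{\ast}$ of $(x_k)$. To see that $x^{\ast}\in F_k$ for each fixed $k$, I would observe that the tail $(x_j)_{j\geq k}$ lies entirely in $F_k$, again by the descending property, still converges to $x^{\ast}$, and $F_k$ is sequentially closed; hence $x^{\ast}\in F_k$. Since $k$ is arbitrary, $x^{\ast}\in\bigcap_{k\geq 1}F_k$, so the intersection is nonempty. For uniqueness, suppose $x^{\ast},y^{\ast}\in\bigcap_{k\geq 1}F_k$. Then $x^{\ast},y^{\ast}\in F_k$ for every $k$, whence $\rho(x^{\ast},y^{\ast})\leq \mathrm{diam}_{\rho}(F_k)\to 0$, forcing $\rho(x^{\ast},y^{\ast})=0$ and thus $x^{\ast}=y^{\ast}$.

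The step I expect to require the most care is the passage from the Cauchy property to a \emph{well-defined} limit lying in the intersection, precisely because a $b$-metric need not generate a genuine topology. The point to verify is that limits of sequences are unique in a $b$-metric space: if $x_k\to x$ and $x_k\to y$, the $b$-triangle inequality gives $\rho(x,y)\leq b\bigl(\rho(x,x_k)+\rho(x_k,y)\bigr)\to 0$, so $x=y$. This is the only place where the constant $b$ genuinely intervenes, and it guarantees that the limit $x^{\ast}$ produced by completeness is unambiguous; everything else reduces verbatim to the classical argument.
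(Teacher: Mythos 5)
Your proof is correct; note that the paper itself offers no proof of this lemma, stating it as a cited result from \cite{bmv}, and your argument is exactly the standard Cantor intersection argument behind it: Cauchyness of a selection $x_k\in F_k$ from the vanishing diameters, completeness plus sequential closedness to place the limit in every $F_k$, and the diameter condition again for uniqueness. One small refinement: the step you single out as delicate, uniqueness of limits via the $b$-triangle inequality, is in fact dispensable, since whichever limit completeness provides lies in $\bigcap_{k\geq 1}F_k$ by closedness, and the one-element conclusion already follows from $\mathrm{diam}_{\rho}(F_k)\to 0$; so the constant $b$ never genuinely intervenes in the proof.
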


Let us first note that a version of Cantor's intersection lemma remains true
in complete vector $B$-metric spaces.

\begin{Lemma}
Let $\left( X,d\right) $ be a complete vector $B$-metric space, and let $%
\left( F_{k}\right) _{k\geq 1}$ be a descending sequence of nonempty closed
subsets of $X$. Assume that for every $\varepsilon >0$, there exists $%
k_{0}\geq 1$ such that
\begin{equation}
d(x,y)\leq \varepsilon e\ \ \text{ for all }x,y\in F_{k}\text{ and }k\geq
k_{0},  \label{cond diametru vectorial}
\end{equation}%
where $e=(1,1,\ldots ,1)$. Then, the intersection $\bigcap\limits_{k=1}^{%
\infty }F_{k}$ contains exactly one element.
\end{Lemma}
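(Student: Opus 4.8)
The plan is to mimic the classical Cantor intersection argument, first establishing that the intersection is nonempty by producing a Cauchy sequence, and then proving uniqueness via the diameter-shrinking hypothesis.

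For existence, I would pick a point $x_k \in F_k$ for each $k \geq 1$. The goal is to show $(x_k)$ is Cauchy. Given $\varepsilon > 0$, hypothesis \eqref{cond diametru vectorial} supplies $k_0$ such that $d(x,y) \leq \varepsilon e$ for all $x,y \in F_k$ whenever $k \geq k_0$. Since the sequence $(F_k)$ is descending, for any $k, p \geq k_0$ both $x_k$ and $x_p$ lie in $F_{k_0}$ (as $x_m \in F_m \subseteq F_{k_0}$ for $m \geq k_0$), so $d(x_k, x_p) \leq \varepsilon e$. This shows componentwise that $d_i(x_k, x_p) \to 0$, hence $(x_k)$ is Cauchy in the sense appropriate to the vector $B$-metric, and by completeness it converges to some $x^{\ast} \in X$. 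The key point is that here the triangle inequality (and thus the matrix $B$) is \emph{not} needed to conclude Cauchyness, because the hypothesis directly bounds $d(x_k, x_p)$ rather than only consecutive distances; this is precisely why the statement is phrased in terms of the uniform diameter condition \eqref{cond diametru vectorial} rather than in terms of distances between successive sets. To see that $x^{\ast} \in \bigcap_k F_k$, fix an index $m$; for $k \geq m$ we have $x_k \in F_k \subseteq F_m$, so $x^{\ast}$ is the limit of a sequence eventually lying in the closed set $F_m$, whence $x^{\ast} \in F_m$. Since $m$ was arbitrary, $x^{\ast} \in \bigcap_{k=1}^{\infty} F_k$.

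For uniqueness, suppose $x^{\ast}, y^{\ast} \in \bigcap_{k=1}^{\infty} F_k$. Fix any $\varepsilon > 0$ and take the corresponding $k_0$ from \eqref{cond diametru vectorial}. Since both points lie in $F_{k_0}$, we get $d(x^{\ast}, y^{\ast}) \leq \varepsilon e$. As $\varepsilon > 0$ is arbitrary, each component $d_i(x^{\ast}, y^{\ast})$ must equal $0$, so $d(x^{\ast}, y^{\ast}) = 0$, and by the positivity axiom $x^{\ast} = y^{\ast}$.

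The argument is essentially routine, and I do not anticipate a serious obstacle. The one subtlety worth flagging is the choice of the diameter condition: because a vector $B$-metric need not satisfy the ordinary triangle inequality with a scalar constant, one cannot in general pass from a bound on distances between points of $F_k$ to the usual metric estimates without invoking $B$. The formulation \eqref{cond diametru vectorial} circumvents this by imposing a uniform bound directly on all pairwise distances within $F_k$, which makes both the Cauchy estimate and the uniqueness estimate immediate without any appeal to $B$ being positive or inverse-positive. Thus, unlike the fixed point theorems earlier in the paper, this lemma requires no structural assumption on $B$ at all.
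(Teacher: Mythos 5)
Your proof is correct, but it takes a genuinely different route from the paper's. The paper does not argue directly in the vector setting: it reduces the statement to the scalar case by passing to $\rho_1(x,y)=\sum_{i=1}^n d_i(x,y)$, observes (as recorded in the Preliminaries) that condition \eqref{cond diametru vectorial} forces $\mathrm{diam}_d(F_k)\to 0$ and that completeness of $(X,d)$ transfers to $(X,\rho_1)$, and then simply invokes the Cantor intersection lemma for scalar $b$-metric spaces (Lemma \ref{Cantor}, quoted from \cite{bmv}). You instead re-prove the Cantor lemma from scratch: pick $x_k\in F_k$, note that the uniform pairwise bound \eqref{cond diametru vectorial} makes $(x_k)$ Cauchy with no appeal to the triangle inequality, use completeness and the sequential closedness of each $F_m$ to place the limit in the intersection, and obtain uniqueness from the positivity axiom. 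Both arguments are valid. The paper's reduction is shorter given the cited lemma and fits its overall strategy of transferring vector statements to scalar ones via $\rho_1$; however, it silently relies on $\rho_1$ being a scalar $b$-metric, which the Preliminaries establish only when $B$ is positive (part (3$^0$) of the proposition relating vector $B$-metrics to scalar $b$-metrics), or via the diagonal reduction when $B$ is inverse-positive, while the lemma as stated imposes no condition on $B$. Your self-contained proof avoids this issue entirely: as you correctly flag, neither the matrix $B$ nor any structural assumption on it enters the argument, so your version is more elementary and, strictly speaking, covers the stated generality more cleanly.
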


\begin{proof}
As stated in the Preliminaries, condition \eqref{cond diametru vectorial}
implies that the diameter of $F_{k}$ with respect to the scalar $b$-metric $%
\rho _{1}$ tends to zero. Since $(X,d)$ is complete, it follows that $%
(X,\rho _{1})$ is also complete. From Cantor's lemma in scalar $b$-metric
spaces (Lemma \ref{Cantor}), we conclude that the intersection $%
\bigcap\limits_{k=1}^{\infty }F_{k}$ has exactly one element.
\end{proof}

\subsection{Ekeland variational principle in vector $B$-metric spaces}

First we state and prove a version of the weak form of Ekeland's variational
principle in vector $B$-metric spaces.

\begin{Theorem}[Weak Ekeland variational principle in vector $B$-metric
spaces]
\label{ep}Let $\left( X,d\right) $ be a complete vector $B$-metric space
such that the $B$-metric $d$ is continuous, and let $f:X\rightarrow
\mathbb{R}
^{n}$ be a lower semicontinuous function bounded from below. Assume that $f$
satisfies the following condition:

\begin{description}
\item[(H)] For every nonempty closed subset $F \subset X $ and every $%
\varepsilon > 0 $, there exists a point $x_{\varepsilon, F} \in F $ such
that
\begin{equation}  \label{h1}
f(x_{\varepsilon, F}) \leq f(x) + \varepsilon e, \quad \text{for all } x \in
F,
\end{equation}
where $e = (1, 1, \ldots, 1) \in \mathbb{R}^n $.
\end{description}

Then, for a given $x_0 \in X $, there exists a sequence $\{x_k\} \subset X $
and a point $x^\ast \in X $ such that $x_k \to x^\ast $ as $k \to \infty $,
\begin{equation}
f(x^\ast) \leq f(x_0) - d(x^\ast, x_0),  \label{c1}
\end{equation}
and
\begin{equation}
f(x^\ast) + d(x^\ast, x_k) \geq f(x) + d(x, x_k) \quad \text{for all } k
\geq 0 \quad \text{implies } x = x^\ast.  \label{c2}
\end{equation}
Moreover,
\begin{equation}
f(x^\ast) \geq f(x) + B d(x^\ast, x) + (B - I) d(x^\ast, x_k) \quad \text{%
for all } k \geq 0 \quad \text{implies } x = x^\ast.  \label{c3}
\end{equation}
\end{Theorem}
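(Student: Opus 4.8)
The plan is to adapt the classical Ekeland scheme to the vector setting by building a descending chain of closed sets whose intersection, via the vector version of Cantor's intersection lemma proved above, is a single point $x^{\ast}$; the three conclusions will then be read off from the membership of $x^{\ast}$, and of any competitor $x$, in these sets. Concretely, I would fix a sequence $\varepsilon_k \downarrow 0$ (say $\varepsilon_k = 2^{-k}$) and set $F_0 := \{x \in X : f(x) + d(x,x_0) \le f(x_0)\}$, which contains $x_0$ and is closed because $f$ is lower semicontinuous and $d$ is continuous. Recursively, given the nonempty closed set $F_{k-1}$, hypothesis (H) supplies a point $x_k \in F_{k-1}$ with $f(x_k) \le f(x) + \varepsilon_{k-1}e$ for all $x \in F_{k-1}$, and I put $F_k := \{x \in F_{k-1} : f(x) + d(x,x_k) \le f(x_k)\}$, again closed and nonempty since $x_k \in F_k$. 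Thus $(F_k)$ is a descending sequence of nonempty closed sets and $(x_k)_{k\ge 0}$ (including the given $x_0$) is the sequence required by the theorem.

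The heart of the argument is the diameter estimate. For $x \in F_k$ one has $f(x) + d(x,x_k) \le f(x_k)$ by definition of $F_k$, while $f(x_k) \le f(x) + \varepsilon_{k-1}e$ by (H) (as $x \in F_{k-1}$); subtracting gives $d(x,x_k) \le \varepsilon_{k-1}e$. Hence for $x,y \in F_k$ the triangle inequality yields $d(x,y) \le B\bigl(d(x,x_k)+d(x_k,y)\bigr) \le 2\varepsilon_{k-1}Be$. This is the one step where the two cases diverge: when $B$ is positive the bound $2\varepsilon_{k-1}Be$ is legitimate and each component is at most $2\varepsilon_{k-1}\max_i (Be)_i$; when $B$ is merely inverse-positive it need not preserve the ordering, so I instead invoke part (2$^{0}$) of the proposition comparing vector $B$-metrics with $b$-metrics, replacing $B$ by the positive diagonal matrix $\underline{B}$ preserving the diagonal of $B$, to get $d(x,y) \le 2\varepsilon_{k-1}\underline{B}e \le 2\varepsilon_{k-1}\tilde{b}\,e$. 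In either case $\mathrm{diam}_d(F_k)\to 0$ in the vector sense required by the Cantor lemma, so $\bigcap_{k}F_k = \{x^{\ast}\}$ for a unique $x^{\ast}$; and since $x_k,x^{\ast}\in F_{k-1}$ with vanishing diameter, $x_k \to x^{\ast}$.

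It remains to harvest the three conclusions. Conclusion \eqref{c1} is immediate, since $x^{\ast}\in F_0$ means precisely $f(x^{\ast}) + d(x^{\ast},x_0) \le f(x_0)$. For \eqref{c2}, suppose $f(x)+d(x,x_k) \le f(x^{\ast})+d(x^{\ast},x_k)$ for all $k$; I would show $x \in F_k$ for every $k$ by induction. Because $x^{\ast}\in F_k$, the defining inequality of $F_k$ gives $f(x^{\ast})+d(x^{\ast},x_k) \le f(x_k)$, so the hypothesis forces $f(x)+d(x,x_k) \le f(x_k)$; combined with the inductive assumption $x \in F_{k-1}$ (the base case $k=0$ using $F_0$) this yields $x \in F_k$. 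Hence $x \in \bigcap_k F_k = \{x^{\ast}\}$, i.e. $x = x^{\ast}$. Finally \eqref{c3} reduces to \eqref{c2}: by the triangle inequality and symmetry, $d(x,x_k) \le B d(x^{\ast},x) + B d(x^{\ast},x_k)$, so the hypothesis $f(x)+Bd(x^{\ast},x)+(B-I)d(x^{\ast},x_k) \le f(x^{\ast})$ of \eqref{c3} gives $f(x)+d(x,x_k) \le f(x^{\ast})+d(x^{\ast},x_k)$ for all $k$, which is exactly the hypothesis of \eqref{c2}, forcing $x = x^{\ast}$.

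The step I expect to be the main obstacle is the diameter estimate under a non-positive (inverse-positive) matrix $B$, where the triangle inequality cannot be iterated monotonically against the componentwise bound $d(x,x_k)\le\varepsilon_{k-1}e$; the remedy is the reduction to the positive diagonal comparison matrix $\underline{B}$ supplied by part (2$^{0}$) of that proposition. A secondary but essential point is careful index bookkeeping, so that the constructed sequence $(x_k)_{k\ge 0}$ is precisely the one appearing in \eqref{c2} and \eqref{c3}, and so that the derivation of \eqref{c3} from \eqref{c2} uses only the raw triangle-inequality axiom (valid for either type of $B$) rather than any monotonicity of $B$.
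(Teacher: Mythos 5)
Your construction is, in substance, the paper's own proof: the nested closed sets $F_k$ generated by condition (H), the vector Cantor lemma to obtain $\bigcap_{k}F_k=\{x^{\ast}\}$, conclusion \eqref{c1} read off from $x^{\ast}\in F_0$, and \eqref{c3} obtained from \eqref{c2} by a single application of the triangle inequality. Your direct induction for \eqref{c2} (the hypothesis forces $x\in F_k$ for every $k$) is just the contrapositive of the paper's argument and is equally valid, as is your derivation of \eqref{c3}, which correctly uses only the raw triangle-inequality axiom.

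The genuine defect is the case split in the diameter estimate. The theorem --- unlike the Perov-type results of Section 3 --- imposes \emph{no} hypothesis on $B$: by Definition \ref{definitie B metrica vectiriala} it is an arbitrary real matrix, and such matrices that are neither positive nor inverse-positive do occur. For instance, the metric of the paper's first example, a vector $B$-metric with $B=\left(\begin{smallmatrix}2&-1\\0&1\end{smallmatrix}\right)$, is also a vector $B'$-metric for $B'=\left(\begin{smallmatrix}2&-1\\1&1\end{smallmatrix}\right)$ (enlarging the matrix preserves the triangle inequality since $d\geq 0$), and $B'$ is neither positive nor inverse-positive, since inverse-positive matrices have nonpositive off-diagonal entries. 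So your two cases prove a strictly weaker statement than the one asserted, and conclusion \eqref{c3} must be proved for the given matrix, not for a more convenient substitute. Moreover, the worry that motivated the split is unfounded: you never need the order-preservation $u\leq v\Rightarrow Bu\leq Bv$, because the bound $d(y,x_k)\leq\varepsilon_{k-1}e$ need not be pushed through $B$ as a vector inequality. It suffices to bound each component of the right-hand side of the triangle inequality: for $y_1,y_2\in F_k$ and each $i$, since $0\leq d_j(y_1,x_k)+d_j(y_2,x_k)\leq 2\varepsilon_{k-1}$, one has $b_{ij}\left(d_j(y_1,x_k)+d_j(y_2,x_k)\right)\leq \left\vert b_{ij}\right\vert\left(d_j(y_1,x_k)+d_j(y_2,x_k)\right)\leq 2\varepsilon_{k-1}\left\vert b_{ij}\right\vert$, hence
\begin{equation*}
d_i(y_1,y_2)\leq\sum_{j=1}^{n}b_{ij}\left(d_j(y_1,x_k)+d_j(y_2,x_k)\right)\leq 2\varepsilon_{k-1}\sum_{j=1}^{n}\left\vert b_{ij}\right\vert .
\end{equation*}
Since $d\geq 0$, this gives $\mathrm{diam}_d(F_k)\rightarrow 0$ for an arbitrary matrix $B$, with no positivity, no inverse-positivity, and no appeal to the comparison proposition (2$^{0}$). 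Replacing your two-case estimate by this one line restores the full generality of the statement and makes your proof coincide with the paper's.
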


\begin{proof}
Let us fix a sequence $\left( \varepsilon _{k}\right) $ of positive numbers
satisfying $\varepsilon _{k}\rightarrow 0$ as $k\rightarrow \infty .$ We now
proceed to construct the sequence $\left( x_{k}\right) .$ Let%
\begin{equation*}
F\left( x_{0}\right) :=\left\{ x\in X:\ f\left( x\right) +d\left(
x,x_{0}\right) \leq f\left( x_{0}\right) \right\} .
\end{equation*}%
Clearly, $x_{0}\in F\left( x_{0}\right) $ and $F\left( x_{0}\right) $ is
closed because $d$ is continuous and $f$ is lower semicontinuous. Then, by
assumption (\ref{h1}), there exists a point $x_{1}\in F\left( x_{0}\right) $
with%
\begin{equation*}
f\left( x_{1}\right) \leq f\left( x\right) +\varepsilon _{1}e\ \ \ \text{for
all }x\in F\left( x_{0}\right) .
\end{equation*}%
Define%
\begin{equation*}
F\left( x_{1}\right) :=\left\{ x\in F\left( x_{0}\right) :\ f\left( x\right)
+d\left( x,x_{1}\right) \leq f\left( x_{1}\right) \right\} ,
\end{equation*}%
and recursively, having $x_{k}\in F\left( x_{k-1}\right) $ with%
\begin{equation*}
f\left( x_{k}\right) \leq f\left( x\right) +\varepsilon _{k}e\ \ \ \text{for
all }x\in F\left( x_{k-1}\right) ,
\end{equation*}%
we define%
\begin{equation*}
F\left( x_{k}\right) :=\left\{ x\in F\left( x_{k-1}\right) :\ f\left(
x\right) +d\left( x,x_{k}\right) \leq f\left( x_{k}\right) \right\} .
\end{equation*}%
The sets $F\left( x_{k}\right) $ are nonempty and closed, and by their
definition form a descending sequence. To apply Cantor's intersection lemma,
we verify that their diameters tend to zero as $k\rightarrow \infty .$
Indeed, for any $y\in F\left( x_{k}\right) \subset F\left( x_{k-1}\right) ,$
one has
\begin{equation*}
f\left( y\right) +d\left( y,x_{k}\right) \leq f\left( x_{k}\right) .
\end{equation*}%
Also, from the definition of $x_{k},$
\begin{equation*}
f\left( x_{k}\right) \leq f\left( y\right) +\varepsilon _{k}e.
\end{equation*}%
Consequently, using the definition of $F(x_{k})$, we deduce
\begin{equation*}
d\left( y,x_{k}\right) \leq f\left( x_{k}\right) -f\left( y\right) \leq
\varepsilon _{k}e,
\end{equation*}%
whence, for every $y_{1},y_{2}\in F\left( x_{k}\right) ,$ we have%
\begin{equation*}
d\left( y_{1},y_{2}\right) \leq B\left( d\left( y_{1},x_{k}\right) +d\left(
y_{2},x_{k}\right) \right) .
\end{equation*}%
As a result, diam$_{d}\left( F\left( x_{k}\right) \right) \rightarrow 0$ as $%
k\rightarrow \infty .$ Thus, by Cantor's lemma,
\begin{equation*}
\bigcap\limits_{k=0}^{\infty }F\left( x_{k}\right) =\left\{ x^{\ast
}\right\} .
\end{equation*}%
From $x^{\ast }\in F\left( x_{0}\right) ,$ one has (\ref{c1}).

Next, we prove (\ref{c2}). To this end, we show the equivalent statement: if
$x\neq x^{\ast },$ then there exists $k=k\left( x\right) \geq 0$ such that
\begin{equation*}
f\left( x^{\ast }\right) +d\left( x^{\ast },x_{k}\right) \ngeqslant f\left(
x\right) +d\left( x,x_{k}\right) ,
\end{equation*}%
that is
\begin{equation*}
f_{i}\left( x^{\ast }\right) +d_{i}\left( x^{\ast },x_{k}\right)
<f_{i}\left( x\right) +d_{i}\left( x,x_{k}\right)
\end{equation*}%
for at least one index $i\in \left\{ 1,2,...,n\right\} .$

Let $x\in X,\ x\neq x^{\ast }$ be arbitrary. Then $x\notin
\bigcap\limits_{k=0}^{\infty }F\left( x_{k}\right) .$ We distinguish two
cases:
\begin{equation*}
\text{(a)\ \ }x\notin F\left( x_{0}\right) ;\ \ \text{(b)\ \ }x\in F\left(
x_{k-1}\right) \ \text{and }x\notin F\left( x_{k}\right) \ \text{for some }%
k=k\left( x\right) \geq 1.
\end{equation*}%
In case (a), we have $f\left( x\right) +d\left( x,x_{0}\right) \nleqslant
f\left( x_{0}\right) .$ In case (b), we have $f\left( x\right) +d\left(
x,x_{k}\right) \nleqslant f\left( x_{k}\right) .$ Thus, in both cases, there
exists $k=k\left( x\right) \geq 0$ such that $f\left( x\right) +d\left(
x,x_{k}\right) \nleqslant f\left( x_{k}\right) .$ This implies that there is
some $i\in \left\{ 1,2,...,n\right\} $ with
\begin{equation*}
f_{i}\left( x\right) +d_{i}\left( x,x_{k}\right) >f_{i}\left( x_{k}\right) .
\end{equation*}%
On the other hand, since $x^{\ast }\in F\left( x_{k}\right) ,$ one has $%
f\left( x^{\ast }\right) +d\left( x^{\ast },x_{k}\right) \leq f\left(
x_{k}\right) .$ In particular, for the index $i$ identified above, it holds
that
\begin{equation*}
f_{i}(x_{k})\geq f_{i}(x^{\ast })+d_{i}(x^{\ast },x_{k}).
\end{equation*}%
Then, from these two ineqialities we obtain
\begin{equation}
f_{i}\left( x^{\ast }\right) +d_{i}\left( x^{\ast },x_{k}\right)
<f_{i}\left( x\right) +d_{i}\left( x,x_{k}\right) ,  \label{idc}
\end{equation}%
which equivalently proves (\ref{c2}).

In order to establish (\ref{c3}), we apply the triangle inequality for $d$
on the right hand side of (\ref{idc}), which gives,
\begin{equation*}
f_{i}\left( x^{\ast }\right) +d_i\left( x^{\ast },x_{k}\right) <f_{i}\left(
x\right) +d_i\left( x,x_{k}\right) \leq f_{i}\left( x\right) +\left(
Bd\left( x^{\ast },x_{k}\right) \right) _{i}+\left( Bd\left( x^{\ast
},x\right) \right) _{i}.
\end{equation*}%
Hence%
\begin{equation*}
f_{i}\left( x^{\ast }\right) +d_i\left( x^{\ast },x_{k}\right) <f_{i}\left(
x\right) +\left( Bd\left( x^{\ast },x_{k}\right) \right) _{i}+\left(
Bd\left( x^{\ast },x\right) \right) _{i},
\end{equation*}%
that is,
\begin{equation*}
f\left( x^{\ast }\right) \ngeqslant f\left( x\right) +Bd\left( x^{\ast
},x\right) +\left( B-I\right) d\left( x^{\ast },x_{k}\right) .
\end{equation*}%
Thus, (\ref{c3}) holds.
\end{proof}

A version of the strong form of Ekeland's variational principle in vector $B$%
-metric spaces is the following one.

\begin{Theorem}[Strong Ekeland variational principle in vector $B$-metric
spaces]
\label{sep}Let $\left( X,d\right) $ be a complete $B$-metric space such that
the $B$-metric $d$ is continuous, and let $f:X\rightarrow
\mathbb{R}
^{n}$ be a lower semicontinuous function bounded from below and satisfying
condition (H). Then, for given $\ \varepsilon ,\delta >0$ and $\ x_{0}\in X$
with%
\begin{equation}
f\left( x_{0}\right) \leq f\left( x\right) +\varepsilon e\ \ \ \text{for all
}x\in X,  \label{ci}
\end{equation}%
there exists a sequence $\left( x_{k}\right) \subset X$ and $x^{\ast }\in X$
such that $x_{k}\rightarrow x^{\ast }\ \ \ $as \ $k\rightarrow \infty ,$%
\begin{equation}
f\left( x^{\ast }\right) \leq f\left( x_{0}\right) ,  \label{s1}
\end{equation}%
\begin{equation}
d\left( x^{\ast },x_{0}\right) \leq \delta e,  \label{s2}
\end{equation}%
\begin{equation*}
f\left( x^{\ast }\right) +\frac{\varepsilon }{\delta }d\left( x^{\ast
},x_{k}\right) \geq f\left( x\right) +\frac{\varepsilon }{\delta }d\left(
x,x_{k}\right) \ \ \text{for all }k\geq 0\ \ \text{\ implies \ }x=x^{\ast }.
\end{equation*}%
Moreover,%
\begin{equation*}
f\left( x^{\ast }\right) \geq f\left( x\right) +\frac{\varepsilon }{\delta }%
Bd\left( x^{\ast },x\right) +\frac{\varepsilon }{\delta }\left( B-I\right)
d\left( x^{\ast },x_{k}\right) \ \ \text{for all }k\geq 0\ \ \ \text{implies
\ }x=x^{\ast }.
\end{equation*}
\end{Theorem}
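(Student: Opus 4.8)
The plan is to deduce the strong form directly from the weak form (Theorem \ref{ep}) by rescaling the $B$-metric. The extra datum $\delta$ and the stronger hypothesis \eqref{ci} serve only to upgrade the energy-decrease estimate \eqref{c1} into the localization estimate \eqref{s2}; everything else transfers mechanically.

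First I would set $\tilde{d}:=\frac{\varepsilon}{\delta}d$ and observe that $\tilde{d}$ is again a vector $B$-metric on $X$ with the \emph{same} matrix $B$: multiplying $d$ by a positive scalar leaves the triangle inequality $\tilde{d}(u,w)\leq B(\tilde{d}(u,v)+\tilde{d}(v,w))$ intact and preserves positivity and symmetry. Since $\tilde{d}$ and $d$ determine exactly the same convergent sequences, the space $(X,\tilde{d})$ is complete, $\tilde{d}$ is continuous, $f$ remains lower semicontinuous and bounded from below, and—because condition (H) refers only to $f$ and to closed subsets, whose family is unchanged under the rescaling—hypothesis (H) continues to hold for $(X,\tilde{d})$. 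Thus all hypotheses of Theorem \ref{ep} are satisfied for $(X,\tilde{d})$.

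Next I would apply Theorem \ref{ep} to $(X,\tilde{d})$, $f$, and the given $x_{0}$, obtaining a sequence $(x_{k})$ and a limit $x^{\ast}$. Rewriting conclusions \eqref{c2} and \eqref{c3} with $\tilde{d}=\frac{\varepsilon}{\delta}d$ reproduces verbatim the last two displayed implications of the present statement, since every occurrence of $d(\cdot,\cdot)$ there acquires the factor $\frac{\varepsilon}{\delta}$. It then remains to produce \eqref{s1} and \eqref{s2}. Conclusion \eqref{c1}, read for $\tilde{d}$, becomes
\begin{equation*}
f(x^{\ast})\leq f(x_{0})-\tfrac{\varepsilon}{\delta}\,d(x^{\ast},x_{0}),
\end{equation*}
and since $d\geq 0$ this already yields \eqref{s1}.

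The one genuinely content-bearing step—and the only place where the global minimization hypothesis \eqref{ci} enters—is the derivation of \eqref{s2}. Rearranging the displayed inequality gives $\frac{\varepsilon}{\delta}\,d(x^{\ast},x_{0})\leq f(x_{0})-f(x^{\ast})$, while taking $x=x^{\ast}$ in \eqref{ci} yields $f(x_{0})-f(x^{\ast})\leq\varepsilon e$. Combining the two produces $\frac{\varepsilon}{\delta}\,d(x^{\ast},x_{0})\leq\varepsilon e$, that is $d(x^{\ast},x_{0})\leq\delta e$, which is \eqref{s2}. I do not anticipate a real obstacle: the argument is a rescaling followed by a single comparison. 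The only point demanding care is the verification that the passage from $d$ to $\tilde{d}$ preserves each hypothesis of Theorem \ref{ep}—in particular completeness, continuity, and condition (H)—which it does precisely because the two metrics are positive scalar multiples of one another.
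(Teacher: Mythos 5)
Your proposal is correct and follows essentially the same route as the paper: apply the weak principle (Theorem \ref{ep}) to the rescaled metric $\frac{\varepsilon}{\delta}d$, read off \eqref{s1} from \eqref{c1}, and combine the inequality $\frac{\varepsilon}{\delta}d(x^{\ast},x_{0})\leq f(x_{0})-f(x^{\ast})$ with \eqref{ci} to get \eqref{s2}. Your explicit verification that completeness, continuity, and condition (H) survive the rescaling is a detail the paper leaves implicit, but it is the same argument.
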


\begin{proof}
We apply the weak form of Ekeland's variational principle, Theorem \ref{ep},
to the vector $B$-metric $\frac{\varepsilon }{\delta }d.$ From (\ref{c1}),
we immediately obtain (\ref{s1}), while from $x^{\ast }\in F\left(
x_{0}\right) $ and (\ref{ci}), we deduce%
\begin{equation*}
\frac{\varepsilon }{\delta }d\left( x^{\ast },x_{0}\right) \leq f\left(
x_{0}\right) -f\left( x^{\ast }\right) \leq \varepsilon e,
\end{equation*}%
whence (\ref{s2}). The remaining conclusions follow directly.
\end{proof}

A consequence of the weak form of Ekeland's variational principle is the
following version of Caristi's fixed point theorem (see \cite{caristi}) in
vector $B$-metric spaces.

\begin{Theorem}
Let $\left( X,d\right) $ be a complete vector $B$-metric space such that the
$B$-metric $d$ is continuous, and let $f:X\rightarrow
\mathbb{R}
^{n}$ be a lower semicontinuous function bounded from below and satisfying
condition (H). Assume that for an operator $N:X\rightarrow X,$ the following
conditions are satisfied:
\begin{equation}
d\left( N\left( x\right) ,y\right) \leq d\left( x,y\right) +Bd\left( N\left(
x\right) ,x\right) ,\ \ \ x,y\in X  \label{cc1}
\end{equation}%
and%
\begin{equation}
Bd\left( N\left( x\right) ,x\right) \leq f\left( x\right) -f\left( N\left(
x\right) \right) ,\ \ \ x\in X.  \label{cc2}
\end{equation}%
Then, $N$ has at least one fixed point.
\end{Theorem}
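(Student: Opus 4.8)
The plan is to derive Caristi's theorem as a direct consequence of the weak Ekeland variational principle (Theorem~\ref{ep}). First I would apply Theorem~\ref{ep} to the given function $f$ and an arbitrary starting point $x_0$, obtaining a sequence $\left(x_k\right)$ and a point $x^{\ast}\in X$ satisfying conclusions~\eqref{c1}, \eqref{c2}, and~\eqref{c3}. The strategy is then to argue by contradiction: I would assume that $x^{\ast}$ is \emph{not} a fixed point of $N$, i.e. $N\left(x^{\ast}\right)\neq x^{\ast}$, and use the Caristi conditions~\eqref{cc1} and~\eqref{cc2} to show that $N\left(x^{\ast}\right)$ violates the minimality property~\eqref{c2} of $x^{\ast}$.

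The key computation would set $x=N\left(x^{\ast}\right)$ and check the hypothesis of the implication in~\eqref{c2}, namely whether
\begin{equation*}
f\left(x^{\ast}\right)+d\left(x^{\ast},x_k\right)\geq f\left(N\left(x^{\ast}\right)\right)+d\left(N\left(x^{\ast}\right),x_k\right)\quad\text{for all }k\geq 0.
\end{equation*}
From~\eqref{cc1} applied with $x$ replaced by $x^{\ast}$ and $y$ replaced by $x_k$, one has $d\left(N\left(x^{\ast}\right),x_k\right)\leq d\left(x^{\ast},x_k\right)+Bd\left(N\left(x^{\ast}\right),x^{\ast}\right)$. Combining this with~\eqref{cc2}, which gives $Bd\left(N\left(x^{\ast}\right),x^{\ast}\right)\leq f\left(x^{\ast}\right)-f\left(N\left(x^{\ast}\right)\right)$, I would obtain
\begin{equation*}
f\left(N\left(x^{\ast}\right)\right)+d\left(N\left(x^{\ast}\right),x_k\right)\leq f\left(N\left(x^{\ast}\right)\right)+d\left(x^{\ast},x_k\right)+f\left(x^{\ast}\right)-f\left(N\left(x^{\ast}\right)\right)=f\left(x^{\ast}\right)+d\left(x^{\ast},x_k\right),
\end{equation*}
which is exactly the hypothesis of~\eqref{c2}, valid for every $k\geq 0$. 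Therefore~\eqref{c2} forces $N\left(x^{\ast}\right)=x^{\ast}$, contradicting the assumption and completing the proof.

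I expect the main obstacle to be bookkeeping with the vector (componentwise) ordering rather than any deep difficulty: the inequalities must be read as vector inequalities in $\mathbb{R}^n$, and the chain above must hold in all $n$ components simultaneously, so I would verify that the positivity of $B$ (or more precisely that $Bd\left(N\left(x^{\ast}\right),x^{\ast}\right)$ is a genuine nonnegative vector dominated by $f\left(x^{\ast}\right)-f\left(N\left(x^{\ast}\right)\right)$) is used consistently. One subtlety worth flagging is that condition~\eqref{cc2} already encodes $f\left(N\left(x^{\ast}\right)\right)\leq f\left(x^{\ast}\right)$ since $Bd\geq 0$, which guarantees the telescoping above is well behaved; the matrix $B$ enters only through~\eqref{cc1}--\eqref{cc2} and never needs to be inverted, so neither the positive nor the inverse-positive hypothesis on $B$ is required here. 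The argument is thus a clean contrapositive application of~\eqref{c2}, and no appeal to~\eqref{c1} or~\eqref{c3} is needed for the fixed-point conclusion.
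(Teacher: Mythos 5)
Your proposal is correct and follows essentially the same route as the paper: both apply the weak Ekeland principle (Theorem \ref{ep}) to $f$ and exploit conclusion \eqref{c2} with $x=N\left(x^{\ast}\right)$, combining \eqref{cc1} (taking $y=x_{k}$) and \eqref{cc2} in exactly the same way, with no positivity or inverse-positivity assumption on $B$ needed. The only organizational difference is that you verify the vector premise of \eqref{c2} directly, whereas the paper argues by contradiction and must unpack the componentwise negation $\ngeqslant$ through a failing index $i$; the underlying computation is identical.
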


\begin{proof}
Assume that $N$ has no fixed points. Then, applying Ekeland's variational
principle to $f$ (Theorem \ref{ep}), from (\ref{c2}), one has
\begin{equation*}
f\left( x^{\ast }\right) +d\left( x^{\ast },x_{k}\right) \ngeqslant f\left(
N\left( x^{\ast }\right) \right) +d\left( N\left( x^{\ast }\right)
,x_{k}\right)
\end{equation*}%
for some $k.$ Therefore, there is an index $i$ with%
\begin{equation*}
f_{i}\left( x^{\ast }\right) +d_{i}\left( x^{\ast },x_{k}\right)
<f_{i}\left( N\left( x^{\ast }\right) \right) +d_{i}\left( N\left( x^{\ast
}\right) ,x_{k}\right) .
\end{equation*}%
Using (\ref{cc2}) gives%
\begin{equation*}
\left( Bd\left( N\left( x^{\ast }\right) ,x^{\ast }\right) \right) _{i}\leq
f_{i}\left( x^{\ast }\right) -f_{i}\left( N\left( x^{\ast }\right) \right)
<d_{i}\left( N\left( x^{\ast }\right) ,x_{k}\right) -d_{i}\left( x^{\ast
},x_{k}\right) ,
\end{equation*}%
that is%
\begin{equation*}
d_{i}\left( x^{\ast },x_{k}\right) +\left( Bd\left( N\left( x^{\ast }\right)
,x^{\ast }\right) \right) _{i}<d_{i}\left( N\left( x^{\ast }\right)
,x_{k}\right) ,
\end{equation*}%
which contradicts (\ref{cc1}). Consequently, $N$ has a fixed point.
\end{proof}

\subsection{New versions of the Ekeland variational principle in $b$-metric
spaces}

We emphasize that in the scalar case, that is, when $n=1,\ B=b\geq 1$ and $%
d=\rho $ is a $b$-metric, our theorems from the previous subsection offer
more natural versions in $b$-metric spaces to the classical results, as
follows.

\begin{Theorem}[Weak Ekeland variational principle in $b$-metric spaces]
\label{ep0}Let $\left( X,\rho \right) $ be a complete $b$-metric space ($%
b\geq 1$) such that the $b$-metric $\rho $ is continuous, and let $%
f:X\rightarrow
\mathbb{R}
$ be a lower semicontinuous function bounded from below. Then, for given $%
x_{0}\in X,$ there exists a sequence $\left( x_{k}\right) \subset X$ and $%
x^{\ast }\in X$ such that $x_{k}\rightarrow x^{\ast }\ \ \ $as \ $%
k\rightarrow \infty ,$%
\begin{equation*}
f\left( x^{\ast }\right) \leq f\left( x_{0}\right) -\rho \left( x^{\ast
},x_{0}\right) ,
\end{equation*}%
and for each $x\in X,$ $x\neq x^{\ast },$ there exists an index $k=k\left(
x\right) $ with
\begin{equation*}
f\left( x^{\ast }\right) +\rho \left( x^{\ast },x_{k}\right) <f\left(
x\right) +\rho \left( x,x_{k}\right) .
\end{equation*}%
Moreover, for each $x\in X,$ $x\neq x^{\ast },$ there exists an index $%
k=k\left( x\right) $ with%
\begin{equation}
f\left( x^{\ast }\right) <f\left( x\right) +b\rho \left( x^{\ast },x\right)
+\left( b-1\right) \rho \left( x^{\ast },x_{k}\right) .  \label{E}
\end{equation}
\end{Theorem}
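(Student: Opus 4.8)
The plan is to derive Theorem \ref{ep0} as the scalar specialization of the vector-valued weak Ekeland principle, Theorem \ref{ep}, taking $n=1$, $B=b$, $d=\rho$ and $e=1$. Every hypothesis of Theorem \ref{ep} is already assumed here \emph{except} condition (H), so the essential first step is to observe that (H) is automatic for a real-valued function bounded from below, and the whole result then follows by specialization.

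First I would verify (H). Fix a nonempty closed subset $F\subset X$ and $\varepsilon>0$. Since $f$ is bounded from below, $m_F:=\inf_{x\in F}f(x)$ is a finite real number, and by the definition of the infimum there is a point $x_{\varepsilon,F}\in F$ with $f(x_{\varepsilon,F})\leq m_F+\varepsilon\leq f(x)+\varepsilon$ for every $x\in F$. With $e=1$ this is precisely \eqref{h1}, so (H) holds. Together with the continuity of $\rho$, the completeness of $(X,\rho)$, and the lower semicontinuity and boundedness below of $f$ — all of which transfer verbatim — this shows that every hypothesis of Theorem \ref{ep} is met with $n=1$, $B=b$, $d=\rho$. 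Theorem \ref{ep} then produces a sequence $(x_k)\subset X$ and a point $x^\ast\in X$ with $x_k\to x^\ast$.

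It remains only to transcribe the three conclusions of Theorem \ref{ep} into scalar language. Conclusion \eqref{c1} reads verbatim as $f(x^\ast)\leq f(x_0)-\rho(x^\ast,x_0)$, the first assertion. Since in $\mathbb{R}$ the negation of $a\geq b$ is $a<b$, the contrapositive of \eqref{c2} states that for each $x\neq x^\ast$ there exists $k=k(x)$ with $f(x^\ast)+\rho(x^\ast,x_k)<f(x)+\rho(x,x_k)$, the second assertion; and the contrapositive of \eqref{c3}, with $B=b$ and $I=1$, yields for each $x\neq x^\ast$ an index $k=k(x)$ satisfying $f(x^\ast)<f(x)+b\,\rho(x^\ast,x)+(b-1)\,\rho(x^\ast,x_k)$, which is exactly \eqref{E}.

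The only point requiring attention — and the conceptual heart of the reduction — is that condition (H) is a genuine extra hypothesis in the vector setting, where $\mathbb{R}^n$ carries no scalar infimum, yet becomes trivial once $f$ is real-valued, which is why it disappears from the statement of Theorem \ref{ep0}. I expect no computational obstacle beyond this observation; the rest is a mechanical componentwise rewriting of \eqref{c1}, \eqref{c2} and \eqref{c3}.
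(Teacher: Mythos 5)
Your proposal is correct and follows exactly the route the paper intends: Theorem \ref{ep0} is obtained by specializing the vector result, Theorem \ref{ep}, to $n=1$, $B=b$, $d=\rho$, with the key observation that condition (H) holds automatically for a real-valued function bounded from below (via the infimum), and that in $\mathbb{R}$ the negation of $\geq$ is $<$, which turns the implications \eqref{c2} and \eqref{c3} into the stated strict-inequality conclusions.
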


\begin{Theorem}[Strong Ekeland variational principle in $b$-metric spaces]
Let $\left( X,\rho \right) $ be a complete $b$-metric space ($b\geq 1$) such
that the $b$-metric $\rho $ is continuous, and let $f:X\rightarrow
\mathbb{R}
$ be a lower semicontinuous function bounded from below. Then, for given $\
\varepsilon ,\delta >0$ and $\ x_{0}\in X$ with%
\begin{equation*}
f\left( x_{0}\right) \leq \inf_{x\in X}f\left( x\right) +\varepsilon ,
\end{equation*}%
there exists a sequence $\left( x_{k}\right) \subset X$ and $x^{\ast }\in X$
such that $x_{k}\rightarrow x^{\ast }\ \ \ $as \ $k\rightarrow \infty ,$%
\begin{equation*}
f\left( x^{\ast }\right) \leq f\left( x_{0}\right) ,
\end{equation*}%
\begin{equation*}
\rho \left( x^{\ast },x_{0}\right) \leq \delta ,
\end{equation*}%
and for each $x\in X,$ $x\neq x^{\ast },$ there exists an index $k=k\left(
x\right) $ with%
\begin{equation*}
f\left( x^{\ast }\right) +\frac{\varepsilon }{\delta }\rho \left( x^{\ast
},x_{k}\right) <f\left( x\right) +\frac{\varepsilon }{\delta }\rho \left(
x,x_{k}\right) .
\end{equation*}%
Moreover, for each $x\in X,$ $x\neq x^{\ast },$ there exists an index $%
k=k\left( x\right) $ with%
\begin{equation}
f\left( x^{\ast }\right) <f\left( x\right) +b\frac{\varepsilon }{\delta }%
\rho \left( x^{\ast },x\right) +\left( b-1\right) \frac{\varepsilon }{\delta
}\rho \left( x^{\ast },x_{k}\right) .  \label{Es}
\end{equation}
\end{Theorem}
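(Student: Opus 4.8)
The plan is to obtain the statement as the scalar specialization ($n=1$, $B=b$, $d=\rho$) of the vector strong Ekeland principle, Theorem~\ref{sep}. Under this identification the partial order on $\mathbb{R}^{n}$ collapses to the usual order on $\mathbb{R}$, the vector $e=(1,\ldots,1)$ becomes the scalar $1$, and hypothesis \eqref{ci} of Theorem~\ref{sep}, namely $f(x_{0})\le f(x)+\varepsilon e$ for all $x\in X$, reads $f(x_{0})\le f(x)+\varepsilon$ for all $x$, which is precisely $f(x_{0})\le\inf_{x\in X}f(x)+\varepsilon$. Thus the hypotheses of the present theorem coincide with those of Theorem~\ref{sep}, with the sole exception that condition (H) must be verified.

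The key observation is that for $n=1$ condition (H) is automatic and need not be imposed. Indeed, for a single component condition \eqref{h1} requires, for every nonempty closed $F\subset X$ and every $\varepsilon>0$, a point $x_{\varepsilon,F}\in F$ with $f(x_{\varepsilon,F})\le f(x)+\varepsilon$ for all $x\in F$, that is, $f(x_{\varepsilon,F})\le\inf_{x\in F}f(x)+\varepsilon$. Since $f$ is bounded from below on $X$, the number $\inf_{x\in F}f(x)$ is finite, and the existence of such an approximate minimizer is nothing but the definition of the infimum. Hence (H) holds for every lower semicontinuous $f$ bounded from below, and Theorem~\ref{sep} applies.

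It then remains only to transcribe the conclusions of Theorem~\ref{sep} into scalar form. The convergence $x_{k}\to x^{\ast}$ and the assertions $f(x^{\ast})\le f(x_{0})$ and $\rho(x^{\ast},x_{0})\le\delta$ are the direct scalar readings of the corresponding conclusions of Theorem~\ref{sep}, namely \eqref{s1} and \eqref{s2}. The two remaining conclusions of Theorem~\ref{sep} are implications asserting that a certain inequality holding for all $k\ge0$ forces $x=x^{\ast}$, and their reformulation is the one point requiring care. Passing to the contrapositive and using that in dimension one the negation of $\ge$ is exactly $<$, each such statement becomes: for every $x\ne x^{\ast}$ there is an index $k=k(x)$ at which the reversed strict inequality holds. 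For the first implication this gives $f(x^{\ast})+\tfrac{\varepsilon}{\delta}\rho(x^{\ast},x_{k})<f(x)+\tfrac{\varepsilon}{\delta}\rho(x,x_{k})$, and for the second, with $B=b$ and $I=1$, it gives exactly \eqref{Es}.

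Alternatively, one may reproduce the proof of Theorem~\ref{sep} directly in the scalar setting by applying the weak $b$-metric principle, Theorem~\ref{ep0}, to the rescaled $b$-metric $\tfrac{\varepsilon}{\delta}\rho$: the conclusion $f(x^{\ast})\le f(x_{0})$ follows from the first displayed inequality of Theorem~\ref{ep0} since the metric term is nonnegative, while $x^{\ast}\in F(x_{0})$ together with $f(x_{0})\le\inf_{X}f+\varepsilon$ gives $\tfrac{\varepsilon}{\delta}\rho(x^{\ast},x_{0})\le f(x_{0})-f(x^{\ast})\le\varepsilon$, i.e.\ $\rho(x^{\ast},x_{0})\le\delta$. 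As neither route presents a genuine obstruction, the essential points to get right are the automatic validity of (H) in the scalar case and the logical inversion of the implication statements noted above, keeping track of the rescaling factor $\tfrac{\varepsilon}{\delta}$.
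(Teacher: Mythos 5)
Your proposal is correct and follows exactly the paper's route: the paper states this theorem as the scalar specialization ($n=1$, $B=b$, $d=\rho$) of Theorem~\ref{sep}, giving no separate proof. You have in fact supplied the two details the paper leaves implicit -- that condition (H) is automatic for a scalar function bounded below (an approximate minimizer on any nonempty closed set exists by the definition of the infimum), and that the implication-form conclusions of Theorem~\ref{sep} convert, by contraposition and the fact that in dimension one the negation of $\geq$ is $<$, into the ``for each $x\neq x^{\ast}$ there exists $k(x)$'' form stated here.
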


\begin{Theorem}[Caristi fixed point theorem in $b$-metric spaces]
Let $\left( X,\rho \right) $ be a complete $b$-metric space ($b\geq 1$) such
that the $b$-metric $\rho $ is continuous, and let $f:X\rightarrow
\mathbb{R}
$ be a lower semicontinuous function bounded from below. If for an operator $%
N:X\rightarrow X,$ one has%
\begin{equation}
\rho \left( N\left( x\right) ,y\right) \leq \rho \left( x,y\right) +b\rho
\left( N\left( x\right) ,x\right) ,\ \ \ x,y\in X  \label{A}
\end{equation}%
and%
\begin{equation}
b\rho \left( N\left( x\right) ,x\right) \leq f\left( x\right) -f\left(
N\left( x\right) \right) ,\ \ \ x\in X,  \label{C}
\end{equation}%
then $N$ has at least one fixed point.
\end{Theorem}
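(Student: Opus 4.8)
The plan is to mirror the vector-valued Caristi argument and obtain the conclusion directly from the weak Ekeland variational principle in $b$-metric spaces (Theorem \ref{ep0}), reasoning by contradiction. First I would assume that $N$ has no fixed point, so that $N(x)\neq x$ for every $x\in X$. Applying Theorem \ref{ep0} to the lower semicontinuous, bounded-below function $f$ and an arbitrary starting point $x_{0}$, I obtain a sequence $(x_{k})\subset X$ and a limit point $x^{\ast}$ with the key property that, for every $x\neq x^{\ast}$, there is an index $k=k(x)$ satisfying
\[
f(x^{\ast})+\rho(x^{\ast},x_{k})<f(x)+\rho(x,x_{k}).
\]

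Next, since $N(x^{\ast})\neq x^{\ast}$ under the standing assumption, I would specialize this strict inequality to the test point $x=N(x^{\ast})$, producing an index $k$ for which
\[
f(x^{\ast})-f(N(x^{\ast}))<\rho(N(x^{\ast}),x_{k})-\rho(x^{\ast},x_{k}).
\]
I then invoke the energy inequality \eqref{C} with $x=x^{\ast}$, namely $b\rho(N(x^{\ast}),x^{\ast})\le f(x^{\ast})-f(N(x^{\ast}))$, and chain it with the display above to reach the strict estimate
\[
\rho(x^{\ast},x_{k})+b\rho(N(x^{\ast}),x^{\ast})<\rho(N(x^{\ast}),x_{k}).
\]

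Finally I would read off condition \eqref{A} with $x=x^{\ast}$ and $y=x_{k}$, which asserts precisely $\rho(N(x^{\ast}),x_{k})\le\rho(x^{\ast},x_{k})+b\rho(N(x^{\ast}),x^{\ast})$, in direct contradiction with the strict inequality just obtained. This contradiction forces $N$ to possess a fixed point. The argument is the scalar shadow of the proof of the vector Caristi theorem, with the simplification that for $n=1$ no component index $i$ needs to be selected. I do not anticipate a genuine obstacle: the entire content lies in choosing the test point $x=N(x^{\ast})$ and in verifying that the continuity of $\rho$ and lower semicontinuity of $f$ make Theorem \ref{ep0} applicable; the only point demanding care is tracking the direction and strictness of the inequalities so that the strict Ekeland conclusion, combined with \eqref{C}, sharply violates the triangle-type bound \eqref{A}.
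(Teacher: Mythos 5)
Your proposal is correct and follows essentially the same route as the paper: the paper obtains this scalar theorem as the case $n=1$, $B=b$ of its vector Caristi theorem, whose proof is exactly your argument (assume no fixed point, apply the weak Ekeland principle, test with $x=N(x^{\ast})$, and combine the strict Ekeland inequality with \eqref{C} to contradict \eqref{A}). Your observation that no component index $i$ needs to be chosen is precisely the simplification the scalar case affords, and your implicit use of the fact that condition (H) holds automatically when $n=1$ (since $f$ is bounded below) is why Theorem \ref{ep0} applies without extra hypotheses.
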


The last three results reduce to the classical ones in ordinary metric
spaces, i.e., if $b=1.$ Thus, (\ref{E}) reduces to%
\begin{equation*}
f\left( x^{\ast }\right) <f\left( x\right) +\rho \left( x^{\ast },x\right)
,\ \ \ \ x\neq x^{\ast };
\end{equation*}%
(\ref{Es}) reduces to%
\begin{equation*}
f\left( x^{\ast }\right) <f\left( x\right) +\frac{\varepsilon }{\delta }\rho
\left( x^{\ast },x\right) ,\ \ \ \ x\neq x^{\ast };
\end{equation*}%
assumption (\ref{A}) trivially holds, while (\ref{C}) becomes the classical
Caristi's inequality%
\begin{equation*}
\rho \left( N\left( x\right) ,x\right) \leq f\left( x\right) -f\left(
N\left( x\right) \right) ,\ \ \ x\in X.
\end{equation*}

\section{Conclusion and further research}

In this paper, we introduced the concept of a vector $B$-metric space.
Several fixed-point theorems, analogous to those in scalar $b$-metric spaces
as well as their classical counterparts, were presented. Additionally, we
discussed some stability results. Finally, we provided a variant of
Ekeland's variational principle alongside a version of Caristi's theorem. It
remains an open question whether the assumption that $B^{-1}-A$ or $I-BA$ is
inverse-positive can be omitted in Theorems \ref{Avramescu}, \ref%
{stabilitate1} and \ref{stabilitate2}. Additionally, one may explore a
variant of Ekeland's variational principle where Caristi's theorem holds
without requiring the additional assumption \eqref{cc1}. Lastly, it would be interesting to study the case where the matrix $B$ is neither positive nor inverse-positive; for instance, when it has positive diagonal elements but contains both positive and negative entries elsewhere.

\section{Aknowledgements}
The authors wish to mention that the notion of a vector $B$-metric space was
suggested by Professor Ioan A. Rus in the Seminar of Nonlinear Operators and
Differential Equations at Babe\c{s}-Bolyai University.

\end{document}